%

\documentclass[ejs]{imsart}

\usepackage{amsthm} 
\usepackage[numbers]{natbib} 
\usepackage{myCommands}
\RequirePackage[colorlinks,citecolor=blue,urlcolor=blue]{hyperref}

\togglefalse{colcom}


\startlocaldefs
\newcommand{\ambdim}{D} 

\newtheorem{thm}{Theorem}[subsection]
\newtheorem{lem}[thm]{Lemma}
\newtheorem{prop}[thm]{Proposition}
\newtheorem{cor}[thm]{Corollary}

\theoremstyle{definition}
\newtheorem{defn}[thm]{Definition}
\newtheorem{conj}[thm]{Conjecture}

\theoremstyle{remark}
\newtheorem{rem}[thm]{Remark}


\numberwithin{equation}{section}

\endlocaldefs

\begin{document}

\begin{frontmatter}

\title{Linear Dimension Reduction\\ 
Approximately 
Preserving\\ 
a Function of the 1-Norm}
\runtitle{Preserving a Function of the 1-Norm} 

\begin{aug}
\author{\fnms{Michael P.} \snm{Casey}\corref{}\ead[label=e1]{mpcasey@alumni.duke.edu}}
\address{U. S. Air Force Research Laboratory,  mpcasey@alumni.duke.edu}
\affiliation{United States Air Force Research Laboratory}

\runauthor{Michael P. Casey}
\end{aug}

\begin{abstract}
For any finite point set in $D$-dimensional space equipped with the 1-norm, 
we present random linear embeddings to $k$-dimensional space, with a new metric, having the following properties. 
For any pair of points from the point set that are not too close, 
the 
distance between 
their images is a strictly concave increasing function of their original distance, up to multiplicative error. 
The target dimension $k$ need only be quadratic in the logarithm of the size of the point set to ensure the result holds with high probability. 
The linear embeddings are random matrices composed of standard Cauchy random variables, and the proofs rely on Chernoff bounds for sums of iid random variables. 
The new metric is translation invariant, but is not induced by a norm.
\end{abstract}

\begin{keyword}[class=MSC]
\kwd[Primary ]{60}
\kwd[; secondary ]{46B09, 46B85, 60E07, 60G50}
\end{keyword}

\begin{keyword}
\kwd{dimension reduction}
\kwd{embeddings of finite metric spaces}
\kwd{random projection}
\kwd{metric preserving function}
\kwd{Cauchy random variables}
\kwd{Cauchy projections}
\kwd{stable distributions}
\kwd{concentration of measure} 
\end{keyword}
%
\end{frontmatter}
%

\section{Introduction}

The Johnson-Lindenstrauss lemma~\citep{JohnsonLindenstrauss1984} states that
for a finite set of points $P\subset\R^\ambdim$ and $0<\epsilon<1$, there are random linear maps $F:\R^\ambdim\to\R^k$ satisfying, for any $x,y\in P$, 
\[
(1-\epsilon)\norm{x-y}_2\leq \norm{F(x)-F(y)}_2\leq (1+\epsilon)\norm{x-y}_2
\]
with high probability, provided $k=\Theta(\epsilon^{-2}\ln\abs{P})$. 
It is sufficient to draw the entries of $F$ $\iid$ sub-Gaussian~\citep{MatousekVariants2008}. 
These random linear projections have provided improved worst case performance bounds for many problems in theoretical computer science, machine learning, and numerical linear algebra.
Ailon and Chazelle~\citep{AilonChazelle2009} show how $F$ may be computed quickly and apply it to the approximate nearest-neighbor problem, working on the projected points $F(P)$. 
Vempala~\citep{VempalaRandom2004} gives a review of problems that may be reduced to analyzing a set of points $P\subset\R^\ambdim$, so that after the random projection $F:\R^\ambdim\to\R^k$ is applied, the recovery of approximate solutions is possible with time and space bounds depending on $k$, the target dimension, instead of $\ambdim$, the ambient dimension. 

In numerical linear algebra, 
Drineas et al.~\citep{DrineasStatLeverage2012} use the lemma to approximate the leverage scores of a given matrix $A$; such scores are used to inform subsampling schemes for $A$, resulting in sketches $\tilde{A}$ of smaller dimensions that preserve desired properties of $A$. 
Drineas and Mahoney~\citep{DrineasMahoneyRandNLA2016} give a further review of using randomness in numerical linear algebra. 

The Johnson-Lindenstrauss lemma is a metric embedding result; the map $F$ sends the finite metric space $P\subset\R^\ambdim$ induced by the 2-norm to a corresponding metric space $F(P)\subset\R^k$, also induced by the 2-norm, such that distances are preserved well. 
Ailon and Chazelle~\citep{AilonChazelle2009} also show that equipping the target space $\R^k$ with the 1-norm is also possible; the target dimension is still proportional to $\ln\abs{P}$, but the dependence on $\epsilon$ may be a bit worse. 
However, analogous results using the 1-norm on the domain do not hold.
For example, in~\citep{BrinkmanCharikarImpossibility2005} and~\citep{LeeNaorEmbedding2004}, specific $N$-point subsets of $\R^\ambdim$ equipped with the $1$-norm are shown to embed only in $\R^k$ with $k=N^{1/c^2}$ if one requires
\[
\norm{x-y}_1\leq \norm{F(x)-F(y)}_1\leq c\norm{x-y}_1. 
\]
In particular, Brinkman and Charikar~\citep{BrinkmanCharikarImpossibility2005} show the target dimension $k$ must be at least $N^{1/2-O(\epsilon\ln(1/\epsilon))}$ if one wants $c=1+\epsilon$. 

In light of these negative results, people have tried estimating $\norm{x-y}_1$ from the coordinates of $F(x)-F(y)$. 
When the entries of $F$ are $\iid$ standard Cauchy random variables, the coordinates are distributed $\iid$ like $\norm{x-y}_1 X$ with $X\drawn\Cauchy{1}$. 
The median of $\norm{x-y}_1\abs{X}$ is $\norm{x-y}_1$, so estimating the median 
from the coordinates of $F(x)-F(y)$ would estimate the distance this way.
Indyk~\citep{IndykStable2006} considers the sample median as an estimator, 
while Li, Hastie, and Church~\citep{LiHastieChurch2007} consider 1-homogeneous functions of these coordinates for estimators. 
None of the estimators considered are metrics on $\R^k$. 
For nearest neighbor methods, we should like to have a metric on the target space $\R^k$ and prefer a low number of coordinates for each point.  

Relaxing the problem as follows,
we wish to find linear maps $F:\R^\ambdim\to\R^k$ 
satisfying, for any $x,y\in P$, 
\[
(1-\epsilon)\mu(\norm{x-y}_1)
\leq \rho(F(x),F(y))
\leq (1+\epsilon)\mu(\norm{x-y}_1) 
\]
with high probability. 
We have changed the metric on $\R^k$ to $\rho$ instead of the one induced by the 1-norm, and we have introduced a nonlinear function $\mu$ in place of the identity function. 
We want $k=\Theta(\epsilon^{-2}\ln^c\abs{P})$, with $c<4$ or better.  

Here, $\mu:\R_+\to\R_+$ is a concave increasing function with $\mu(0)=0$. 
Such $\mu$ are called \qt{metric preserving} by Corazza~\citep{CorazzaIntroduction1999}, 
for the following reason:
\[
\mu(\norm{x-y}_1)\leq \mu(\norm{x-z}_1)+\mu(\norm{z-y}_1)
\qtext{for any} x,y,z\in\R^\ambdim,
\]
that is, they admit a new metric on the space that is \qt{compatible} with the old one.
In particular, spheres for the new metric about a particular point $y\in\R^\ambdim$, that is, the level sets
$\set{x\in\R^\ambdim\st \mu\after\norm{x-y}_1=t},$
look like scaled versions of spheres for the 1-norm (crosspolytopes) about that point; 
the scaling however is nonlinear. 
The 1-norm is used here as an example, but any other input metric will still satisfy the triangle inequality under such $\mu$.
Not all metric preserving functions are concave increasing, but such a choice ensures the new metric generates the same topology as the old one.

For us, the linear map $F:\R^\ambdim\to\R^k$ will have entries $F_{ij}\overset{\iid}{\drawn}\Cauchy{1}$, and we introduce the metric $\rho$ on $\R^k$ using an auxiliary function $\xi$:
\[
\rho(x,y):=\frac1{k}\sum_{i=1}^k \xi(\abs{x_i-y_i})
\]
with
\[
\xi(\lambda):=\ln(1+\sqrt{\lambda})+\frac1{2}\ln(1+\lambda)
\qtext{and}
\mu(\lambda):=\E\xi(\lambda F_{11})
\]
for $\lambda>0$. 
Our main theorem has several regimes depending on how big $\norm{x-y}_1$ can be. 
(See theorems~\ref{thm:BigRegime}, \ref{thm:MediumRegime}, and \ref{thm:SmallRegime}.) 
However, the primary result is as follows.
\begin{thm}\label{thm:basicNonlinearJL1Norm}
Let $F$, $\rho$, and $\mu$ be as above. 
Given $N$ points $P\subset\R^\ambdim$ and $\epsilon\in(0,1)$, 
\[
\mu\left(\frac{\norm{x-y}_1}{1+\epsilon}\right)
\leq \rho(F(x),F(y))
\leq \mu((1+\epsilon)\norm{x-y}_1) 
\]
for all $x,y\in P$ with $\norm{x-y}_1\geq \sqrt{1+\epsilon}$, 
provided
\[
k=\frac{C}{\epsilon^2(1-\epsilon)^2}\ln N. 
\]
\end{thm}

Independent of its interest as an analog of the Johnson-Lindenstrauss lemma, theorem~\ref{thm:basicNonlinearJL1Norm} also contributes to the study of $p$-stable projections. 
In fact, we make the following conjecture for $1<p<2$ upon replacing the entries $F_{ij}$ of $F$ by $\iid$ standard $p$-stable random variables and setting $\mu(\lambda)=\E\xi(\lambda F_{11})$. 
Just like the~\ref{thm:basicNonlinearJL1Norm}, the conjecture could have several parts based on how large $\norm{x-y}_p$ is, but the primary conjecture is as follows.
\begin{conj}
With $F$ and $\mu$ modified as above, and $\rho$, $\epsilon$, and $k$ as in theorem~\ref{thm:basicNonlinearJL1Norm}, the following bound holds
\[
\mu\left(\frac{\norm{x-y}_p}{1+\epsilon}\right)
\leq \rho(F(x),F(y))
\leq \mu\big((1+\epsilon)\norm{x-y}_p\big) 
\]
for all $x,y\in P$ with $\norm{x-y}_p=\Omega(1)$. 
\end{conj}
The setup for the proof would be the same as for theorem~\ref{thm:basicNonlinearJL1Norm}, relying on 1st and 2nd moment estimates for $\xi(\lambda\abs{W})$; however, because the density for a $p$-stable random variable $W$ is only implicitly defined, the needed 1st and 2nd moment estimates are not so straightforward, but could be empirically found on the computer using methods such as~\cite{ChambersMallowsStuckStable1976} to draw the $p$-stable random variables. 
This approach, in which we directly project the points from $\R^\ambdim$, may be contrasted to embedding $\ell_p^\ambdim\into \ell_1^n$ and applying theorem~\ref{thm:basicNonlinearJL1Norm} there. 
Pisier~\citep{PisierDimension1983} (see also \citep[chapter~8]{milman_asymptotic_1986} and \citep[chapter~9]{LedouxTalagrandProbability1991}) shows that such embeddings exist with distortion $(1+\epsilon)$, with $n$ proportional to $\ambdim$ and depending on $p$ and $\epsilon$.

\section{Overview of the Proof}\label{sec:Outline}
In this section, we explain the choices for the function $\xi$ and the metric $\rho$, as well as the use of Cauchy random variables, outlining the proof along the way. 

Consider a point $v\in \R^\ambdim$. 
The 1-stability of the Cauchy distribution dictates that the coordinates of the projected point $F(v)$ are Cauchy distributed: $F(v)_j\drawn \norm{v}_1 X_j$ with $X_j\overset{\iid}{\drawn}\Cauchy{1}$. 
The metric $\rho$ is then an empirical mean:
\[
\rho(F(v),0)=\frac1{k}\sum_{j=1}^k\xi(\norm{v}_1 X_j), 
\]
and if we marginalize out the Cauchy dependence, we recover the deterministic function $\mu$ of $\norm{v}_1$: 
\[
\E\rho(F(v),0)=\E \xi(\norm{v}_1 X)=:\mu(\norm{v}_1)
\qtext{for} 
X\drawn\Cauchy{1}. 
\]

We can now outline the proof as follows: let $x-y=v\in \R^\ambdim$. 
The projection map $F:\R^\ambdim\to\R^k$ is linear and the metric $\rho$ is translation invariant, so our goal is to show
$\mu(\norm{v}_1)\approx \rho(F(v),0)$
or upon setting $\norm{v}_1=\lambda$,  
\[
\mu(\lambda)\approx \frac1{k}\sum_{j=1}^k\xi(\lambda X_j)
\]
with high probability.
As usual, we use the exponential Markov inequality and the $\iid$ assumption to estimate
\begin{gather*}
\Prob\set{\frac1{k}\sum_{j=1}^k\xi(\lambda\abs{X_j})-\mu(\lambda)>t}\leq \left(e^{-st}\E e^{s\big(\xi(\lambda\abs{X})-\mu(\lambda)\big)}\right)^k 
\end{gather*}
with a similar setup for the lower tail. 
However, Cauchy random variables $X$ only have finite \emph{fractional} moments, 
\[
\E\abs{X}^b<\infty \qtext{only for} \abs{b}<1, 
\]
so the presence of $\xi(\lambda\abs{X})$ in the exponential requires $\xi(\lambda)=c\ln(o(\lambda))$ when $\lambda$ is large. 
Our choice of $\xi$ ensures this behavior:
\[
\xi(\lambda)=\ln(1+\sqrt{\lambda})+\frac1{2}\ln(1+\lambda)\leq 2\ln(1+\sqrt{\lambda}),
\]
while the presence of the \qt{1+} in the logarithms ensures $\xi$ is nonnegative, increasing, and sends 0 to 0. 
The function $\xi$ is thus subadditive and preserves the triangle inequality:
\[
\xi(\abs{x_i-y_i})\leq \xi(\abs{x_i-z_i})+\xi(\abs{z_i-y_i}),
\]
ensuring $\rho$ is a metric on $\R^k$. 
Because $\mu$ is the expectation of $\xi$, it inherits these properties, so that $\mu\after\norm{}_1$ induces a metric on the original space $\R^\ambdim$. 

We show in sections~\ref{sec:UpperTails} and~\ref{sec:LowerTails} that our tail bounds take the following form: To be concrete, here is the upper tail case, but the other lower tail cases are similar
\begin{equation}\label{eqn:tailBoundTemplate}\tag{$\diamond$}
\min_s e^{-s\Delta}\E e^{s\big(\xi(\lambda\abs{X})-\mu(\lambda)\big)}
\leq \exp\left(-\frac{\Delta^2}{4(V^2+A)}\right)
\end{equation}
with $\Delta$ depending on $\mu(\lambda)$, the function $V^2$ giving an upper bound for the 2nd moment or the variance of $\xi(\lambda \abs{X})$, and the auxiliary function $A(\lambda)$, derived from tail estimates for $\xi(\lambda \abs{X})$. 
The particular form of $\xi$ was chosen to give explicit control over all these quantities as $\lambda$ varies, allowing us to obtain bounds on equation~\eqref{eqn:tailBoundTemplate} that only weakly depend on $\lambda$. 

We arrive at the particular form~\eqref{eqn:tailBoundTemplate} for the tail bounds by estimating the moment generating function as follows, taking the upper tail as an example:
with $Y=\xi(\lambda\abs{X})-\mu(\lambda)$, we split $\E\exp(sY)$ into two terms and desire each to be bounded by something quadratic in $s$:
for the 1st term, using a 2nd order Taylor expansion for the exponential,
\[
\E\exp(sY)\I\set{sY\leq 1}\leq 1+s^2\E Y^2\leq 1+s^2V^2
\]
while for the 2nd term, we use integration by parts, eventually showing 
\[
\E\exp(sY)\I\set{sY> 1}\\
=e\Prob\set{Y>1/s}+\int_1^\infty e^t\Prob\set{Y>t/s}\,dt
\leq s^2 A(\lambda). 
\]
We can show the integrand decays exponentially in $t$ using our choice of $\xi$ and the explicit density for the Cauchy distribution:
\begin{align*}
\Prob\set{Y>t/s}&=\Prob\set{\xi(\lambda\abs{X})> \mu(\lambda)+t/s}\\
&\leq \Prob\set{2\ln(1+\sqrt{\lambda\abs{X}})>\mu(\lambda)+t/s}\leq C(\lambda)e^{-t/s}  
\end{align*}
with $C$ depending on $\lambda$ and $\mu(\lambda)$. 
We can then combine these estimates and optimize in $s$:
\begin{align*}
\min_s e^{-s\Delta}&\E\exp(sY)
\leq \min_s e^{-s\Delta}\exp(s^2V^2+s^2A(\lambda))
=\exp\left(\frac{-\Delta^2}{4(V^2+A(\lambda))}\right), 
\end{align*}
using $s=\Delta/(2(V^2+A(\lambda)))$. 

The tail probabilities now have the form
\[
\Prob\set{\abs{\frac1{k}\sum_{j=1}^k\xi(\lambda\abs{X_j})-\mu(\lambda)}>\Delta}
\leq 2\exp\left(-k\frac{\Delta^2}{4(V^2+A(\lambda))}\right)
\]
for a single $\lambda$ corresponding to a single vector  $v=x-y\in\R^\ambdim$. 
There are at most $\binom{N}{2}$ pairs of points from $P$, so we would want to choose the target dimension as
\[
k=(c+2)\ln(N)\frac{4(V^2+A(\lambda))}{\Delta^2}
\]
to ensure with probability at least $1-N^{-c}$, 
\[
\abs{\frac1{k}\sum_{j=1}^k\xi(\lambda\abs{X_j})-\mu(\lambda)}\leq\Delta 
\]
for all pairs of points simultaneously. 
However, the error $\Delta$ and the target dimension $k$ both depend on $\lambda$, so we require uniform estimates for these quantities. 
We find these by breaking up the possible values for $\lambda$ into three regimes: small, medium, and big
\begin{center}
\includegraphics[scale=.5]{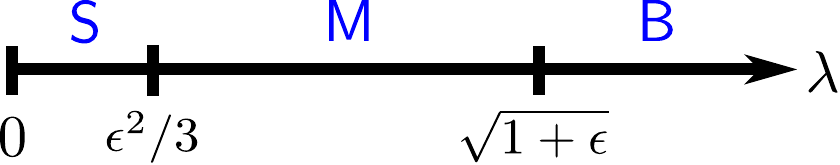}
\end{center} 
Our choice of $\xi$ provides an explicit function for $\mu:=\E\xi(\lambda\abs{X})$, (lemma~\ref{lem:XiFirstMoment})
\[
\mu(\lambda):=\frac1{2}\ln(1+\lambda^2)+\atanh\left(\frac{\sqrt{2\lambda}}{1+\lambda}\right) 
\qtext{with}
\atanh(x):=\sum_{j=0}^\infty \frac{x^{2j+1}}{2j+1} 
\]
for $\abs{x}<1$. 
The big regime has $\mu$ behaving like the log term, while the medium and small regimes have it behaving like $\Theta(\sqrt{2\lambda})$. 
The choice of $\xi$ also gives us a bound on the variance (corollary~\ref{rem:VarianceBounded})
\[
\Var(\xi(\lambda\abs{X}))
\leq \min\set{\frac{\pi^2}{2},2\E\ln(1+\lambda\abs{X})}. 
\]
The constant bound, independent of $\lambda$, is used for the big regime, 
while the expectation bound provides finer control on the variance when $\lambda$ is small, via another explicit function, lemma~\ref{lem:ExplicitEpxpectationLogLinearSimplerFunctions}, of $\lambda$. 

For the big regime, taking $\Delta$ as 
\[
\mu((1+\epsilon)\lambda)-\mu(\lambda)
\qtext{and}
\mu(\lambda)-\mu((1+\epsilon)^{-1}\lambda),
\]
both of which are bounded by lemma~\ref{lem:UpperDeviationEstimate}, 
together with the constant bound for the variance give theorem~\ref{thm:BigRegime}, as $A(\lambda)$ is bounded here. 

For the medium and small regimes, we take $\Delta=\epsilon\mu(\lambda)$ and use  
corollary~\ref{cor:2ndMomentRatioBound} to bound $V^2/\mu(\lambda)^2$.
The split between medium and small regimes occurs because of the $\ln(\lambda)$ term in that ratio: the target dimension has $\Delta^2\sim\lambda$ on the bottom, while the upper tail bound~\ref{lem:GenUpp} required $s^\ast$ to only have $\Delta$ on top
\[
s^\ast=\Delta/(2(V^2+A(\lambda)))<1/2. 
\]
This mismatch in powers of $\Delta$ forces us to choose a cutoff $\lambda$; 
because $A(\lambda)$ (and $V^2$) have terms proportional to $\lambda$, the above inequality can only hold for $\lambda$ not too small. 
This gives the $\ln(\epsilon^2/3)$ term in theorem~\ref{thm:MediumRegime} for the medium regime. 

For the small regime, there is no such restriction on $s$ for the lower tail bound~\ref{lem:LowerTailSmall}, but the target dimension still grows like $\ln(1/\lambda)$ as $\lambda$ decreases (See lemma~\ref{lem:LowerDeviation}.).  
We stop that growth by fixing a particular $\lambda_0$, showing that for all smaller $\lambda$, the $(1-\epsilon)$ error has a suitable replacement in theorem~\ref{thm:SmallRegime}. 
The key is lemma~\ref{lem:SuppositionBoundOnMaxAndUseOfHomog}: we choose $\lambda_0$ so that $\lambda_0\max_i\abs{X_i}<1/6$ with high probability, making \emph{both} $\xi(\lambda\abs{X_i})$ and $\mu(\lambda)$ behave like $\Theta(\sqrt{\lambda})=\sqrt{\eta}\Theta(\sqrt{\lambda_0})$ for $\lambda=\eta\lambda_0$ with $\eta\in(0,1)$. 
Because $\lambda_0$ turns out to be $\Theta(1/(N^{c+2}k))$, the $-\ln(\lambda_0)$ in the target dimension forces $k=\Theta(\epsilon^{-2}\ln^2(N^{c+2}))$, a quadratic dependence on $\ln(N)$. 

We finish the proofs in the next section, while the upper and lower tail estimates are provided in sections~\ref{sec:UpperTails} and~\ref{sec:LowerTails}. 
We collect the estimates on the 1st and 2nd moments in appendix~\ref{sec:Moments}, and ancillary identities for those estimates in appendix~\ref{app}.

\section{Finishing the Proof}
We now tie down the target dimension $k$. Recall $P$ is a set of $N$ points in $\R^D$, and $F:\R^D\to\R^k$ is a matrix of $\iid$ $\Cauchy{1}$ entries. 
In what follows, the estimates are not sharp. 
\begin{thm}[Big Regime]\label{thm:BigRegime}
For $\epsilon\in(0,1)$ and $\norm{x-y}_1\geq \sqrt{1+\epsilon}$, 
\[
\mu\left(\frac{\norm{x-y}_1}{1+\epsilon}\right)
\leq \rho\big(F(x),F(y)\big)
\leq \mu\big((1+\epsilon)\norm{x-y}_1\big)
\]
for all $x,y\in P$ with probability at least $1-N^{-c}$ provided
\[
k\geq \frac{C}{\epsilon^2(1-\epsilon)^2}\ln(N^{c+2})
\qtext{with}
C=64\left(\frac{\pi^2}{2}+\frac{16\sqrt{2}}{e\pi}e^{\atanh{1/\sqrt{2}}}\right). 
\]
\end{thm}
\begin{rem}
The constants are not expected to be sharp; $C$ is computed so that $k$ is uniformly bounded with respect to $\norm{x-y}_1\geq\sqrt{1+\epsilon}$. 
\end{rem}
\begin{proof}
Let $\lambda=\norm{x-y}_1$. 
We want to use the lower and upper tail estimates from lemmas~\ref{lem:LowerTailBig} and~\ref{lem:GenUpp}, so it remains to verify
\[
s^\ast=\frac{\Delta}{2(V^2+A(\lambda))}\leq 1/2 
\]
with $\Delta$ either
\[
\mu(\lambda)-\mu\big((1+\epsilon)^{-1}\lambda\big)
\qtext{or}
\mu\big((1+\epsilon)\lambda\big)-\mu(\lambda). 
\]
By lemma~\ref{lem:UpperDeviationEstimate}, the differences $\Delta$ are at most $\epsilon$ for $\lambda\geq\sqrt{1+\epsilon}$, 
while the upper bound for the variance of $\xi(\lambda\abs{X})$ is $V^2=\pi^2/2$ by corollary~\ref{rem:VarianceBounded}.
Because $\epsilon<1$, we then certainly have $s^\ast<1/2$. 

As explained in section~\ref{sec:Outline}, the target dimension $k$ is chosen to ensure the union bound is at most $N^{-c}$ for both tails combined.
The choice of $C$ comes from the lower bound for the $\Delta$'s from lemma~\ref{lem:UpperDeviationEstimate} and the larger of the two $A(\lambda)$ functions in lemmas~\ref{lem:LowerTailBig} and~\ref{lem:GenUpp}.
\end{proof}

\begin{thm}[Medium Regime]\label{thm:MediumRegime}
For $\norm{x-y}_1\in[\epsilon^2/3,\sqrt{1+\epsilon}]$ and 
$\epsilon\in(0,1)$, 
\[
(1-\epsilon)\mu\big(\norm{x-y}_1\big)
\leq \rho\big(F(x),F(y)\big)
\leq (1+\epsilon)\mu\big(\norm{x-y}_1\big)
\]
for all $x,y\in P$ with probability at least $1-N^{-c}$ provided
\[
k=\frac{4\ln(N^{c+2})}{\epsilon^2}\left(C+\frac{8}{\pi}\big(1-\ln(\epsilon^2/3)\big)\right)
\qtext{with $C$ bounded.} 
\]
\end{thm}
\begin{rem}
We have not been able to establish an upper bound result 
\[
\rho(F(x),F(y)) \leq (1+\epsilon)\mu(\norm{x-y}_1)
\]
with high probability when $\norm{x-y}<\epsilon^2/3$. 
Our proofs break down or require a much higher estimate for the target dimension $k$. 
We conjecture that $k=O(\ln^2(N^c)/\epsilon^2)$ still suffices, in light of theorem~\ref{thm:SmallRegime} for the small regime. 
\end{rem}
\begin{proof}
With $\lambda=\norm{x-y}_1$, we take $\Delta=\epsilon\mu(\lambda)$. 
By lemma~\ref{lem:LowerDeviation}, the lower bound for $\rho(F(x),F(y))$ requires an initial estimate for the target dimension of
$\tilde{k}=2\ln(N^{c+2})\epsilon^{-2}C(1-\ln(\lambda^\ast))$
with $\lambda^\ast$ the smallest $\lambda$ we wish to consider. 
The upper bound will force our choice of $\lambda^\ast$. 

We now want to use the upper tail estimate from lemma~\ref{lem:GenUpp}. 
It remains to check
\[
s^\ast=\frac{\epsilon\mu(\lambda)}{2(V^2+A(\lambda))}\leq 1/2, 
\]
and it suffices to show $\epsilon\mu(\lambda)/A(\lambda)\leq 1$. 
With $b=16e/(\pi(e-1)^2)\approx 4.7$ from $A(\lambda)$, 
we use lemma~\ref{lem:BoundsOnAtanhTermXiFirstMoment} for the upper bound for $\mu(\lambda)$ to find, after some estimation,
\[
\frac{\epsilon\mu(\lambda)}{A(\lambda)}
\leq \epsilon\frac{\sqrt{1+\lambda^2}}{b}\left(\frac{3}{\sqrt{2}}\frac1{(1+\lambda)\sqrt{\lambda}}+\frac{\lambda}{2}\right)
\leq \begin{cases}
.7\epsilon &\text{if $\lambda\in[1,\sqrt{1+\epsilon}]$}\\
1 &\text{if $\lambda\in[\epsilon^2/3,\,1]$} 
\end{cases},
\]
recalling $\epsilon\in(0,1)$. 

Using the expression for $A(\lambda)$, we now have the following estimate for the target dimension. 
Because $V^2$ is an estimate on the variance now, we can remove the $1+$'s from corollary~\ref{cor:2ndMomentRatioBound} to find
\[
k=\frac{4\ln(N^{c+2})}{\epsilon^2}\left(\frac{9}{2}(\sqrt{1+\epsilon}+4/\pi)+
    2+\frac{8}{\pi}\big(1-\ln(\epsilon^2/3)\big)
+\frac{b}{2}(1+\sqrt{1+\epsilon})^2\right)
\]
using $\mu(\lambda)\geq \sqrt{2\lambda}/(1+\lambda)$ from remark~\ref{rem:MuApproxSmallScales}. 
The $b$ dependent term here is enough to ensure $k\geq\tilde{k}$, so both sides of the inequality for $\rho(F(x),F(y))$ hold with high probability and this dimension $k$. 
\end{proof}

The following two lemmas lead to theorem~\ref{thm:SmallRegime}, which shows that a lower bound for $\rho(F(x),F(x))$ continues to hold for \emph{all} $\norm{x-y}_1<2$. 
\begin{lem}\label{lem:LowerDeviation}
For $\epsilon\in(0,1)$ and $0<\norm{x-y}_1\in[\lambda^\ast,2)$, 
\[
(1-\epsilon)\mu\big(\norm{x-y}_1\big)
\leq \rho\big(F(x),F(y)\big)
\]
for all $x,y\in P$ with probability at least $1-N^{-c}$ provided
\[
k=\frac{2\ln(N^{c+2})}{\epsilon^2}
    \left(C+\frac{8}{\pi}\big(1-\ln(\lambda^\ast)\big)\right)
\qtext{with $C$ bounded.}
\]
\end{lem}
\begin{rem}
The estimates are not sharp. 
\end{rem}
\begin{proof}
With $\lambda=\norm{x-y}_1$, we take $\Delta=\epsilon\mu(\lambda)$. 
Using corollary~\ref{cor:2ndMomentRatioBound} and the lower tail esimate from lemma~\ref{lem:LowerTailSmall}, the target dimension is
\[
k=\frac{2\ln(N^{c+2})}{\epsilon^2}\max\set{
    1+(9/2)(2+4/\pi),\,
    3+(8/\pi)\big(1-\ln(\lambda^\ast\big)}
\]
to ensure the bound holds with probability at least $1-N^{-c}$ for all pairs of points. 
\end{proof}

\begin{lem}\label{lem:SuppositionBoundOnMaxAndUseOfHomog}
For $1\leq i\leq k$, let $X_i\overset{\iid}{\drawn}\Cauchy{1}$. 
For $0<\epsilon<1$ and $0<\lambda_0\leq 1$, suppose
\[
(1-\epsilon)
\mu(\lambda_0)
\leq \frac1{k}\sum_{i=1}^k \xi(\lambda_0\abs{X_i})
\]
and $\lambda_0\max_i\abs{X_i}\leq c_0\leq 1/6$. 

Then if $0<\eta<1$, the same $X_i$ also satisfy
\[
(1-\epsilon')
\mu(\eta\lambda_0)
\leq \frac1{k}\sum_{i=1}^k \xi(\eta\lambda_0\abs{X_i})
\]
with $\epsilon'$ depending on $\epsilon$, $c_0$, and $\lambda_0$. 
If $\lambda_0\leq \epsilon^2$, then we can have
\[
1-\epsilon'=(1-\epsilon)\frac{1-\epsilon^2/\sqrt{2}}{1+3\epsilon^2}. 
\]
\end{lem}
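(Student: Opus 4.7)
The plan is to exploit the near $1/2$-homogeneity of both $\xi$ and $\mu$ on a neighborhood of the origin. Since $\lambda_0\abs{X_i}\leq c_0\leq 1/6$ for every $i$, and since $0<\eta<1$, the arguments $\eta\lambda_0\abs{X_i}$ also lie in $[0,c_0]$. In this regime, lemma~\ref{lem:TargetMetricOneHalfHomogeneity} gives $\xi(a)=\sqrt{a}(1+O(\sqrt{a}))$, so dividing the two expansions $\xi(\eta a)=\sqrt{\eta a}(1+O(\sqrt{\eta a}))$ and $\xi(a)=\sqrt{a}(1+O(\sqrt{a}))$ yields a pointwise comparison of the form $\xi(\eta a)=\sqrt{\eta}\,\xi(a)(1+\delta(a,\eta))$ with $\abs{\delta(a,\eta)}\leq r(c_0)$ for an explicit function $r$ satisfying $r(c_0)=O(\sqrt{c_0})$ as $c_0\to 0$. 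This is what converts the assumed concentration at scale $\lambda_0$ into one at scale $\eta\lambda_0$.

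First I would make the pointwise bound quantitative: use the Taylor expansions $\ln(1+\sqrt{a})=\sqrt{a}-a/2+O(a^{3/2})$ and $\frac12\ln(1+a)=a/2+O(a^2)$ on $[0,c_0]$ to get two-sided estimates $\sqrt{\eta}(1-r(c_0))\,\xi(\lambda_0\abs{X_i})\leq \xi(\eta\lambda_0\abs{X_i})\leq \sqrt{\eta}(1+r(c_0))\,\xi(\lambda_0\abs{X_i})$ for every $i$. Averaging in $i$ and invoking the hypothesis on the empirical mean of $\xi(\lambda_0\abs{X_i})$ gives
\[
\sqrt{\eta}(1-\epsilon)(1-r(c_0))\,\mu(\lambda_0)\;\leq\;\frac1k\sum_{i=1}^k \xi(\eta\lambda_0\abs{X_i})\;\leq\;\sqrt{\eta}(1+\epsilon)(1+r(c_0))\,\mu(\lambda_0).
\]

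Next I would convert $\sqrt{\eta}\,\mu(\lambda_0)$ to $\mu(\eta\lambda_0)$ using the analogous expansion for $\mu$. By remark~\ref{rem:MuApproxSmallScales}, $\mu(\lambda)=\sqrt{2\lambda}\,(1+O(\sqrt{\lambda}))$ for $\lambda\leq 1$, so $\sqrt{\eta}\,\mu(\lambda_0)=\mu(\eta\lambda_0)(1+s(\lambda_0))$ with $\abs{s(\lambda_0)}=O(\sqrt{\lambda_0})$ uniformly in $\eta\in(0,1)$. Combining with the previous display produces
\[
(1-\epsilon)(1-r(c_0))(1+s(\lambda_0))^{-1}\mu(\eta\lambda_0)\leq\frac1k\sum_{i=1}^k\xi(\eta\lambda_0\abs{X_i})\leq(1+\epsilon)(1+r(c_0))(1-\abs{s(\lambda_0)})^{-1}\mu(\eta\lambda_0),
\]
so one can take $\epsilon':=\epsilon+r(c_0)+O(\sqrt{\lambda_0})$ plus cross terms, all of which are controlled once $r(c_0)$ and $\sqrt{\lambda_0}$ are small. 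For the final assertion, if $\lambda_0=O(\epsilon^2)$ then $\sqrt{\lambda_0}=O(\epsilon)$, and choosing the allowed upper bound $c_0=O(\epsilon^2)$ gives $r(c_0)=O(\epsilon)$, yielding $\epsilon'=O(\epsilon)$ once $\epsilon$ is small enough for the $(1\pm\cdot)(1\pm\cdot)$ products to linearize.

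The main obstacle is bookkeeping: both $\xi$ and $\mu$ contribute corrections to the ideal relation $\xi(\eta a)=\sqrt{\eta}\,\xi(a)$, and these corrections have to be carried multiplicatively through the averaging and the normalization by $\mu(\eta\lambda_0)$ without compounding into something larger than $O(\epsilon)$. Concretely, one must verify that the implicit constants in $r(c_0)$ arising from the Taylor remainder of $\xi$ on $[0,1/6]$, together with the constant in $s(\lambda_0)$, are absolute and small enough that requiring $c_0\leq\kappa\epsilon^2$ and $\lambda_0\leq\kappa\epsilon^2$ for a fixed $\kappa$ suffices. No probabilistic content is needed beyond the hypothesis; the entire argument is deterministic on the event $\{\lambda_0\max_i\abs{X_i}\leq c_0\}$.
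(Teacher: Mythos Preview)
Your approach is essentially the paper's: sandwich $\xi(\eta\lambda_0\abs{X_i})$ between multiples of $\sqrt{\eta}\,\xi(\lambda_0\abs{X_i})$ via lemma~\ref{lem:TargetMetricOneHalfHomogeneity}, average and apply the hypothesis, then use remark~\ref{rem:MuApproxSmallScales} to trade $\sqrt{\eta}\,\mu(\lambda_0)$ for $\mu(\eta\lambda_0)$. Two minor points: the corrections you quote as $O(\sqrt{c_0})$ and $O(\sqrt{\lambda_0})$ are actually $O(c_0)$ and $O(\lambda_0)$ (the $-a/2$ and $+a/2$ terms in your own Taylor expansions cancel, matching the sharper bound in lemma~\ref{lem:TargetMetricOneHalfHomogeneity}), and the paper tracks the explicit multipliers rather than big-$O$; neither affects the validity of your argument or the final $\epsilon'=O(\epsilon)$ conclusion.
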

\begin{rem}
Analogous upper bounds are also possible, with a similar proof. 
\end{rem}
\begin{proof}
A fourth order Taylor expansion with Lagrange remainder shows 
\[
\sqrt{a}\leq \xi(a)\leq \sqrt{a}(1+a/2)
\qtext{for} 0\leq a\leq 1/6. 
\]
Because $\max_i{\abs{X_i}}\leq c_0\leq 1/6$ and $0<\eta<1$, we invoke the above inequality twice to find
\[
\frac{\sqrt{\eta}}{1+\lambda_0c_0/2}\xi(\lambda_0\abs{X_i})
\leq \frac{\sqrt{\eta}}{1+\lambda_0\abs{X_i}/2}\xi(\lambda_0\abs{X_i})
\leq \xi(\eta\lambda_0\abs{X_i}). 
\]
By assumption, summing over $i$ and dividing by $k$ yields
\[
(1-\epsilon)\frac{\sqrt{\eta}}{1+\lambda_0c_0/2}\mu(\lambda_0)
\leq \frac1{k}\sum_{i=1}^k \xi(\eta\lambda_0\abs{X_i})
\]
We finish by using remark~\ref{rem:MuApproxSmallScales} (twice) to \qt{absorb} $\sqrt{\eta}$ into $\mu$,
\[
\sqrt{\eta}\mu(\lambda_0)\geq \frac{\sqrt{2\eta\lambda_0}}{1+\lambda_0}
\geq \frac{\mu(\eta\lambda_0)}{1+\lambda_0}\frac{1+\eta\lambda_0}{1+\eta\lambda_0(1+1/\sqrt{2})} 
\geq \frac{\mu(\eta\lambda_0)}{1+\lambda_0}(1-\eta\lambda_0/\sqrt{2})
\]
\end{proof}

\begin{thm}[Small Regime]\label{thm:SmallRegime}
For $\epsilon\in[N^{-(c+2)/2},1]$ and all $\norm{x-y}_1<2$, 
the following bound holds: 
\[
(1-\epsilon)\frac{1-\epsilon^2/\sqrt{2}}{1+3\epsilon^2}\mu(\norm{x-y}_1)
\leq \rho(F(x),F(y)),
\]
with probability at least $1-N^{-c}$, provided
\[
k=\frac{C\ln^2(N^{c+2})}{\epsilon^2}. 
\]
\end{thm}
\begin{proof}
We can use lemma~\ref{lem:LowerDeviation} with $\lambda^\ast=\lambda_0$ to cover all distances $\norm{x-y}_1$ down to $\lambda_0$. 
We then choose $\lambda_0$ in order to extend the lower bound to distances smaller than $\lambda_0$, using lemma~\ref{lem:SuppositionBoundOnMaxAndUseOfHomog}. 

Concretely, recall from section~\ref{sec:Outline} that because $F$ is a linear map of $\iid$ Cauchy entries, 
\[
\rho(F(x/\eta),F(y/\eta))\drawn \frac1{k}\sum_{i=1}^k\xi(\lambda_0\abs{X_i})
\qtext{and}
\rho(F(x),F(y))\drawn \frac1{k}\sum_{i=1}^k\xi(\eta\lambda_0\abs{X_i})
\] 
with \emph{the same} $X_i\overset{\iid}{\drawn}\Cauchy{1}$. 
Let $Z=\max_{1\leq i\leq k}\abs{X_i}$.  
To use lemma~\ref{lem:SuppositionBoundOnMaxAndUseOfHomog}, we just need 
to ensure $\lambda_0Z\leq 1/6$ with high probability. 
By the independence of the $X_i$, 
\[
\Prob\set{\lambda_0Z\leq 1/6}
=\big(1-(2/\pi)\arctan(6\lambda_0)\big)^k
\geq 1-k(12/\pi)\lambda_0. 
\]
So set $\lambda_0=\pi/(12 N^{c+2}k)$.
Choosing $k$ according to lemma~\ref{lem:LowerDeviation} with $\lambda_0=\lambda^\ast$, we have the following inequality for the target dimension
\[
k\geq \frac{2\ln(N^{c+2})}{\epsilon^2}
    \left(C+\frac{8}{\pi}\big(1-\ln(\pi/(12 N^{c+2}k))\big)\right). 
\]
Taking $k=C\ln^2(N^{c+2})/\epsilon^2$ satisfies the above, provided $\epsilon^2>N^{-c-2}$, say. 
We now have the conditions of 
lemma~\ref{lem:SuppositionBoundOnMaxAndUseOfHomog} satisfied for all $\binom{N}{2}$ pairs of points, with probability at least $1-N^{-c}$, and $\lambda_0<\epsilon^2$.  
\end{proof}

\section{Upper Tails}\label{sec:UpperTails}
In the following lemmas, the estimates are not sharp. 
\begin{lem}[General Upper Tail]\label{lem:GenUpp}
With $Y=\E\xi(\lambda\abs{X})-\mu(\lambda)$ and $V^2\geq \Var(\xi(\lambda\abs{X}))$, 
\[
\min_s e^{-s\Delta}\E\exp(sY)\leq \exp\left(\frac{-\Delta^2}{4(V^2+A(\lambda))}\right) 
\]
and is minimized at $s^\ast$
with 
\[
A(\lambda)=\frac{16 e}{\pi (e-1)^2}\sqrt{\frac{\lambda^2}{1+\lambda^2}} \qtext{provided}
s^\ast=\frac{\Delta}{2(V^2+A(\lambda))}\leq 1/2. 
\]
\end{lem}
\begin{proof}
From the discussion in section~\ref{sec:Outline}, we just need to establish the $A(\lambda)$ function for the integration by parts terms. 
To ensure $\E\exp(sY)$ is finite, we require $s<1$. 
For $1>s>0$ and $t>1$, we then estimate, with $w=\mu+t/s$,
\begin{gather*}
\Prob\set{\xi(\lambda\abs{X})>w}
\leq \Prob\set{2\ln(1+\sqrt{\lambda\abs{X}})>w}
\leq \frac{2}{\pi}\arctan\left(\frac{\lambda}{(\exp(w/2)-1)^2}\right),
\intertext{and for all $w\geq w_0$ may be bounded by} 
\leq \frac{2}{\pi}\frac{\lambda}{(\exp(w/2)-1)^2}
\leq \frac{2\lambda}{\pi}\frac{e^{-w}}{(1-\exp(-w_0/2))^2} 
=:C(\lambda)e^{-t/s}. 
\end{gather*}
We shall choose $w_0$ and hence $C(\lambda)$ a bit later; note that $C(\lambda)$ contains the factor $e^{-\mu}$. 

With $Y=\xi(\lambda\abs{X})-\mu(\lambda)$, we can now estimate the integration by parts terms
\[
\E\exp(sY)\I\set{sY>1}=e\Prob\set{Y>1/s}+\int_1^\infty e^t\Prob\set{Y>t/s}\,dt
\]
as at most
\begin{gather*}
eC(\lambda)e^{-1/s}+C(\lambda)\int_1^\infty e^{t(1-1/s)}\,dt
=eC(\lambda)\frac{e^{-1/s}}{1-s}
\leq \frac{4}{e}C(\lambda)\frac{s^2}{1-s}
\end{gather*}
using $e^{-1/s}\leq (2/e)^2s^2$ and $s\in(0,1)$. 
Assuming $s\leq 1/2$, we can now write, for a suitable upper bound $A(\lambda)$, 
\[
\E\exp(sY)\leq 1+V^2s^2+\frac{8}{e}C(\lambda)s^2
\leq 1+s^2(V^2+A(\lambda)), 
\]
and we may optimize in $s$
\[
\min_s e^{-s\Delta}\E\exp(sY)\leq \min_s e^{-s\Delta+s^2(V^2+A)}
=\exp\left(\frac{-\Delta^2}{4(V^2+A)}\right)
\]
at 
\[
s^\ast=\frac{\Delta}{2(V^2+A)}.  
\]

It remains to choose $w_0$ and hence $A(\lambda)$. 
Recalling the formula for $\mu$ either from section~\ref{sec:Outline} or directly from lemma~\ref{lem:XiFirstMoment}, we can lower bound $w=\mu(\lambda)+t/s\geq (1/2)\ln(1+\lambda^2)+ 2\geq 2$ provided $s\leq 1/2$. 
Choosing $w_0=2$, 
\[
\frac{8}{e}C(\lambda)
\leq \frac{16}{\pi e}\frac{\lambda}{(1+\lambda^2)^{1/2}}\frac1{(1-1/e)^2}
=\frac{16 e}{\pi (e-1)^2}\sqrt{\frac{\lambda^2}{1+\lambda^2}}
=:A(\lambda). 
\]
\end{proof}

\section{Lower Tails}\label{sec:LowerTails}
Unlike for the upper tails, we can control the lower tails for the full range of $\lambda$. 
We address $\lambda$ bounded away from 0 using the same techniques as for the upper tail. 
The lower tail proof for small $\lambda$ simplifies because $\xi(\lambda\abs{X})$ is nonnegative, so that there is no restriction on optimizing $s$ in the moment generating function. 

In the following lemmas, the estimates are not sharp. 
\begin{lem}[Lower Tail, Big Regime]\label{lem:LowerTailBig}
With $Y=\E\xi(\lambda\abs{X})-\mu(\lambda)$ and $V^2\geq \Var(\xi(\lambda\abs{X}))$, 
\[
\min_s e^{-s\Delta}\E\exp(-sY)\leq \exp\left(\frac{-\Delta^2}{4(V^2+A(\lambda))}\right) 
\]
and is minimized at $s^\ast$ with 
\[
A(\lambda)
=\frac{16}{e\pi}e^{\atanh(1/\sqrt{2})}\sqrt{\frac{1+\lambda^2}{\lambda^2}}
\qtext{provided}
s^\ast=\frac{\Delta}{2(V^2+A(\lambda))}\leq 1/2.
\]
\end{lem}
\begin{proof}
Just as in the upper tail computations, 
\[
\E\exp(-sY)\I\set{-sY\leq 1}\leq \E(1-sY+(-sY)^2)=1+ V^2s^2. 
\]
and
\[
\E\exp(-sY)\I\set{-sY>1}
\leq e\Prob\set{-Y>1/s}+\int_1^\infty \exp(t)\Prob\set{-Y>t/s}\,dt. 
\]
We shall again determine the function $A(\lambda)$ by estimating a tail, but now it is the lower tail
\[
\Prob\set{-Y>t/s}=\Prob\set{\xi(\lambda\abs{X})<\mu-t/s}.  
\]
By subadditivity of $\sqrt{a}$, 
\[
\xi(a)=\ln(1+\sqrt{a})+\frac1{2}\ln(1+a)
\geq \ln(\sqrt{1+a})+\frac1{2}\ln(1+a)
=\ln(1+a). 
\]
We now can estimate
\[
\Prob\set{-Y>t/s}
<\frac{2}{\pi}\arctan\left(e^{-t/s}\frac{e^\mu}{\lambda}\right)
<\frac{2}{\pi}\frac{e^\mu}{\lambda}e^{-t/s}
=:C(\lambda)e^{-t/s}. 
\]
We can then upper bound the integration by parts terms just like in the proof for the upper tail lemma~\ref{lem:GenUpp}. 
Assuming $s\leq 1/2$, we choose an upper bound $A(\lambda)$ for $(8/e)C(\lambda)$ and arrive at 
\[
\min_s e^{-s\Delta}\E\exp(-sY)\leq \exp\left(\frac{-\Delta^2}{4(V^2+A(\lambda))}\right) 
\]
To find $A(\lambda)$, note that $\mu(\lambda)\leq \atanh(1/\sqrt{2})+(1/2)\ln(1+\lambda^2)$, so that
\[
\frac{8}{e}C(\lambda)
\leq \frac{16}{e\pi}e^{\atanh(1/\sqrt{2})}\sqrt{\frac{1+\lambda^2}{\lambda^2}}
=:A(\lambda) 
\]
which is bounded for $\lambda$ away from 0.
\end{proof}

\begin{lem}[Lower Tail, Small Regimes]\label{lem:LowerTailSmall}
With $Y=\E\xi(\lambda\abs{X})-\mu(\lambda)$ and $V^2\geq \E\xi(\lambda\abs{X})^2$, 
\[
\min_s e^{-s\Delta}\E\exp(-sY)\leq \exp\left(\frac{-\Delta^2}{2V^2}\right) 
\]
\end{lem}
\begin{proof}
Because $\xi(\lambda\abs{X})$ is nonnegative, we can use the 2nd order Taylor expansion of $\exp(-x)$ to write
\[
e^{-s\Delta+s\mu}\E\exp(-s\xi(\lambda\abs{X}))
\leq e^{-s\Delta+s\mu}(1-s\mu+\frac{s^2}{2}V^2)
=e^{-s\Delta+s^2V^2/2} 
\]
and we can then optimize $s$ in the usual way. 
\end{proof}

\section*{Acknowledgments}
This work was supported in part by Duke University while completing my Ph.D. thesis.
I should like to thank my advisor Professor Sayan Mukherjee for encouraging me in completing this work. 
%
I should also like to thank the anonymous referee, whose comments helped greatly streamline the paper. 
%
I should like to thank Mom, Dad, Katie, and everyone who has been praying for me throughout my time at Duke. 
%
I should finally like to thank the Blessed Virgin Mary, Saint Joseph, and the Holy Trinity for helping me be patient throughout this work.

\appendix
\section{The First and Second Moments}\label{sec:Moments}
Here we derive the explicit formula for $\mu(\lambda)=\E\xi(\lambda\abs{X})$ in lemma~\ref{lem:XiFirstMoment} and the upper bounds $V^2$ for $\Var(\xi(\lambda\abs{X}))$ in lemma~\ref{rem:VarianceBounded}. 
Some of this work is a bit tedious, but it will allow us to give explicit upper bounds on the target dimension
\[
k=C\frac{V^2+A}{\Delta^2}. 
\]
We need bounds for $\mu(\lambda)$ when $\lambda$ is small (lemma~\ref{rem:MuApproxSmallScales}) as well as lower bounds for the $\Delta$'s 
\[
\mu((1+\epsilon)\lambda)-\mu(\lambda)
\qtext{and}
\mu(\lambda)-\mu((1+\epsilon)^{-1}\lambda)
\]
when $\lambda$ is \qt{large} (lemma~\ref{lem:UpperDeviationEstimate}). 

Because the Cauchy density has particularly simple behavior when extended to the complex plane, we heavily rely on complex analysis techniques. 
We chose $\xi$ to be the linear combination 
\[
\xi(\lambda\abs{X})=\ln(1+\sqrt{\lambda\abs{X}})+\frac1{2}\ln(1+\lambda\abs{X})
\]
as it will simplify the estimates as well as be easy to compute using a pair of contour integrals. 
For both moments, the contour integral setup below will greatly facilitate computations; in particular, it will allow us to avoid estimating
\[
\E\ln^2(1+\sqrt{\lambda\abs{X}})\qtext{and}\E\ln^2(1+\lambda\abs{X})
\]
individually, which while possible, is not necessary for our results.
\begin{prop}[Contour Integral Setup]\label{lem:ContourIntegralSetup}
For $\lambda>0$, $b>0$, and $X\drawn\Cauchy{1}$, 
\begin{align*}
\E\ln^b(1+\sqrt{\lambda\abs{X}})
&=\ln^b(1+\sqrt{i\lambda})+\ln^b(1+\sqrt{-i\lambda})\\
    &\qquad-\frac1{2}\E\ln^b(1+i\sqrt{\lambda\abs{X}})
    -\frac1{2}\E\ln^b(1-i\sqrt{\lambda\abs{X}}). 
\end{align*}
\end{prop}
\begin{rem}
The task is then to simplify the complex logarithms on the right hand side when particular values of $b$ are chosen. We shall choose $b=1$ and $b=2$ in the next sections.
\end{rem}
\begin{proof}
We want to compute
\[
I(\lambda):=\E\ln^b(1+\sqrt{\lambda\abs{X}})
=\frac{2}{\pi}\int_0^\infty \frac{\ln^b(1+\sqrt{\lambda x})}{1+x^2}\,dx 
\]
via contour integration. 
Extending to $z\in\C-(-\infty,0]$, let
\[
f(z):=\frac{2}{\pi}\frac{\ln^b(1+\sqrt{\lambda z})}{1+z^2}
\]
which has simple poles at $z=\pm i$. 

We shall compute $I(\lambda)$ be using two different contours that both traverse the interval $[0,R]$ in the positive direction. 
Specifically, $\mathcal{C}^+$ is oriented counterclockwise, while
$\mathcal{C}^-$ is oriented clockwise, setting
\[
\mathcal{C}^+:=[0,R]\cup \mathcal{C}^+(R)\cup \mathcal{C}^+_\epsilon(R)
\qtext{and}
\mathcal{C}^-:=[0,R]\cup \mathcal{C}^-(R)\cup \mathcal{C}^-_\epsilon(R)
\]
with \qt{large} arcs 
\[
\mathcal{C}^\pm(R):=\set{Re^{\pm i\theta}\st 0\leq\theta\leq \pi-\epsilon}
\]
and segments rotating as $\epsilon\to 0$ to the negative real axis 
\[
\mathcal{C}^\pm_\epsilon(R):=\set{re^{\pm i(\pi-\epsilon)}\st R\geq r\geq 0}.
\]

Check that 
\[
\lim_{R\to\infty}\int_{\mathcal{C}^\pm(R)}f(z)\,dz=0. 
\]
Keeping in mind the orientations of the contours, the residue theorem dictates for $R>1$,
\begin{align*}
\int_{\mathcal{C}^+}f(z)\,dz &=2\pi i\res_{z=i}f(z)\\
&=2\pi i\lim_{z\to i}(z-i)\frac{2}{\pi}
    \frac{\ln^b(1+\sqrt{\lambda z})}{(z-i)(z+i)}
=2\ln^b(1+\sqrt{\lambda i}) 
\end{align*}
and similarly
\begin{align*}
\int_{\mathcal{C}^-}f(z)\,dz
&=-2\pi i\lim_{z\to -i}(z-(-i))\frac{2}{\pi}
    \frac{\ln^b(1+\sqrt{\lambda z})}{(z-i)(z+i)}
=2\ln^b(1+\sqrt{-i\lambda}). 
\end{align*}

It remains to show that 
\begin{gather*}
\lim_{R\to\infty}\lim_{\epsilon\to 0}
    \left(
    \int_{\mathcal{C}^+_\epsilon(R)}f(z)\,dz
    +\int_{\mathcal{C}^-_\epsilon(R)}f(z)\,dz\right)\\
=\E\ln^b(1+i\sqrt{\lambda\abs{X}})+\E\ln^b(1-i\sqrt{\lambda\abs{X}}). 
\end{gather*}
For these $\mathcal{C}^\pm_\epsilon(R)$ integrals, note that
\[
\sqrt{re^{\pm i(\pi-\epsilon)}}=\sqrt{r}e^{\mp i\epsilon/2}e^{\pm i\pi/2}
=\pm i\sqrt{r}e^{\mp i\epsilon/2}, 
\]
which approaches $\pm i \sqrt{r}$ when $\epsilon\to 0$. 
Consequently, when $z=re^{i(\pi-\epsilon)}=-re^{-i\epsilon}$, 
we can use the dominated convergence theorem to conclude
\begin{align*}
\lim_{\epsilon\to 0}\int_{\mathcal{C}^+_\epsilon(R)}f(z)\,dz
&=\lim_{\epsilon\to 0}\int_R^0 f(-re^{-i\epsilon})\,(-e^{-i\epsilon})dr\\
&=\lim_{\epsilon\to 0}\int_0^R
    \frac{2}{\pi}\frac{e^{-i\epsilon}\ln^b(1+i\sqrt{\lambda r}e^{-i\epsilon/2})}
        {1+r^2e^{-2i\epsilon}}\,dr\\
&=\int_0^R\frac{2}{\pi}\frac{\ln^b(1+i\sqrt{\lambda r})}{1+r^2}\,dr,
\end{align*}
checking that the integrand is bounded by a summable one when $\epsilon<\pi/8$ say. 
Sending $R\to\infty$ recovers
\[
\lim_{R\to\infty}\lim_{\epsilon\to 0}\int_{\mathcal{C}^+_\epsilon(R)}f(z)\,dz
=\E\ln^b(1+i\sqrt{\lambda\abs{X}}). 
\]
Similar reasoning applies to the $\mathcal{C}^-_\epsilon(R)$ integral to yield
\[
\lim_{R\to\infty}\lim_{\epsilon\to 0}\int_{\mathcal{C}^-_\epsilon(R)}f(z)\,dz
=\E\ln^b(1-i\sqrt{\lambda\abs{X}})
\]
Putting everything together, we have
\begin{gather*}
2\ln^b(1+\sqrt{i\lambda})+2\ln^b(1+\sqrt{-i\lambda})\\
=2I(\lambda)+\E\ln^b(1+i\sqrt{\lambda\abs{X}})+\E\ln^b(1-i\sqrt{\lambda\abs{X}})
\intertext{that is}
\ln^b(1+\sqrt{i\lambda})+\ln^b(1+\sqrt{-i\lambda})\\
\qquad=I(\lambda)+\frac1{2}\E\ln^b(1+i\sqrt{\lambda\abs{X}})+\frac1{2}\E\ln^b(1-i\sqrt{\lambda\abs{X}}) 
\end{gather*}
as claimed.
\end{proof}

\subsection{1st Moment}
Recall from definition~\ref{defn:ATanh} that $\atanh(x)$ may be defined by the power series
\[
\atanh(x)=\sum_{j=0}^\infty \frac{x^{2j+1}}{2j+1}
\qtext{for}
\abs{x}<1. 
\]
\begin{lem}\label{lem:XiFirstMoment}
If $\lambda>0$ and $X\drawn\Cauchy{1}$, then
\[
\E\ln(1+\sqrt{\lambda\abs{X}})
=\atanh\left(\frac{\sqrt{2\lambda}}{1+\lambda}\right)+\frac1{2}\ln(1+\lambda^2)
    -\frac1{2}\E\ln(1+\lambda\abs{X})
\]
that is,
\[
\mu(\lambda):=\E\xi(\lambda\abs{X})
=\atanh\left(\frac{\sqrt{2\lambda}}{1+\lambda}\right)+\frac1{2}\ln(1+\lambda^2).
\]
\end{lem}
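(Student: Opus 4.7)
The plan is to invoke the contour integral setup of Lemma~\ref{lem:ContourIntegralSetup} with $b=1$ and simplify both the residue contribution and the complex logarithm expectations on the right-hand side into the real quantities appearing in the claim. Once the first displayed identity is established, the formula for $\mu(\lambda)$ follows by adding $\frac12 \E\ln(1+\lambda|X|)$ to both sides.

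First, I would simplify the residue terms. Using the principal branch, $\sqrt{i\lambda}=\sqrt{\lambda/2}(1+i)$ and $\sqrt{-i\lambda}=\sqrt{\lambda/2}(1-i)$ are complex conjugates, so their product satisfies
\[
(1+\sqrt{i\lambda})(1+\sqrt{-i\lambda}) = 1 + \sqrt{2\lambda} + \lambda,
\]
a positive real number. Since both factors lie in the right half-plane with conjugate arguments in $(-\pi/2,\pi/2)$, the two complex logarithms sum to $\ln(1+\sqrt{2\lambda}+\lambda)$. Next, for the complex-log expectations, write $\ln(1\pm i y)=\tfrac12\ln(1+y^2)\pm i\arctan(y)$ with $y=\sqrt{\lambda|X|}$; the imaginary parts cancel in the sum, giving
\[
\tfrac12\E\ln(1+i\sqrt{\lambda|X|})+\tfrac12\E\ln(1-i\sqrt{\lambda|X|})
= \tfrac12 \E\ln(1+\lambda|X|).
\]
Substituting these simplifications into Lemma~\ref{lem:ContourIntegralSetup} at $b=1$ yields
\[
\E\ln(1+\sqrt{\lambda|X|}) = \ln(1+\sqrt{2\lambda}+\lambda) - \tfrac12\E\ln(1+\lambda|X|).
\]

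It remains to convert $\ln(1+\sqrt{2\lambda}+\lambda)$ into the claimed $\atanh$ form. Using $\atanh(x)=\tfrac12\ln\tfrac{1+x}{1-x}$ with $x=\sqrt{2\lambda}/(1+\lambda)$, the key algebraic identity is
\[
(1+\lambda+\sqrt{2\lambda})(1+\lambda-\sqrt{2\lambda})=(1+\lambda)^2-2\lambda=1+\lambda^2,
\]
which forces
\[
2\atanh\!\left(\tfrac{\sqrt{2\lambda}}{1+\lambda}\right)
=\ln\tfrac{1+\lambda+\sqrt{2\lambda}}{1+\lambda-\sqrt{2\lambda}}
=2\ln(1+\sqrt{2\lambda}+\lambda)-\ln(1+\lambda^2),
\]
so $\ln(1+\sqrt{2\lambda}+\lambda)=\atanh(\sqrt{2\lambda}/(1+\lambda))+\tfrac12\ln(1+\lambda^2)$. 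Plugging this in gives the first displayed formula, and adding $\tfrac12\E\ln(1+\lambda|X|)$ to both sides yields $\mu(\lambda)$.

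The main obstacle is really a bookkeeping issue rather than a deep one: care is needed with the principal branch of the square root and logarithm to justify that the residue sum collapses to the real logarithm $\ln(1+\sqrt{2\lambda}+\lambda)$ rather than differing by a multiple of $2\pi i$. Once branch choices are fixed (consistent with those used in Lemma~\ref{lem:ContourIntegralSetup}), the arguments of $1+\sqrt{\pm i\lambda}$ lie in $(-\pi/2,\pi/2)$ with opposite signs, so the standard rule $\ln(zw)=\ln z+\ln w$ applies and no correction term appears. The remaining steps are symbolic manipulations with $\atanh$ and the factorization above.
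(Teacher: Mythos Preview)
Your proposal is correct and follows the same overall architecture as the paper: invoke Lemma~\ref{lem:ContourIntegralSetup} at $b=1$, collapse the pair of complex-log expectations to $\tfrac12\E\ln(1+\lambda|X|)$, and simplify the residue sum $\ln(1+\sqrt{i\lambda})+\ln(1+\sqrt{-i\lambda})$ to $\atanh(\sqrt{2\lambda}/(1+\lambda))+\tfrac12\ln(1+\lambda^2)$.

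The one genuine difference is in how you handle the residue sum. The paper splits each $\ln(1+z)$ as $\atanh(z)+\tfrac12\ln(1-z^2)$ via Lemma~\ref{lem:LogToAtanhLogSplit}, then combines the two $\atanh$ terms using the addition formula (Lemma~\ref{lem:ATanhAdditionFormula}) and the two $\ln$ terms via $\ln(1-i\lambda)+\ln(1+i\lambda)=\ln(1+\lambda^2)$. You instead observe directly that $(1+\sqrt{i\lambda})(1+\sqrt{-i\lambda})=1+\sqrt{2\lambda}+\lambda$ is positive real, take the real logarithm, and then recover the $\atanh$ via the factorization $(1+\lambda)^2-2\lambda=1+\lambda^2$. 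Your route is more elementary and self-contained---it needs only the branch check you already flag---whereas the paper's route reuses the $\atanh$/$\ln$ decomposition machinery that it needs anyway for the second-moment computation (Lemma~\ref{lem:Residues2ndMomentLogSquareRoot}). Either way the expectation step is identical to the paper's use of Remark~\ref{rem:LogShiftPM}.
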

\begin{proof}
Starting from lemma~\ref{lem:ContourIntegralSetup} with $b=1$, 
\begin{gather*}
\E\ln(1+\sqrt{\lambda\abs{X}})\\
=\ln(1+\sqrt{i\lambda})+\ln(1+\sqrt{-i\lambda})
    -\frac1{2}\E\ln(1+i\sqrt{\lambda\abs{X}})
    -\frac1{2}\E\ln(1-i\sqrt{\lambda\abs{X}}). 
\end{gather*}
By lemma~\ref{lem:LogToAtanhLogSplit} and the atanh addition formula~\ref{lem:ATanhAdditionFormula}, 
\begin{gather*}
\ln(1+\sqrt{\lambda i})+\ln(1+\sqrt{-\lambda i})\\
=\atanh(\sqrt{\lambda i})+\atanh(\sqrt{-\lambda i})
    +\frac1{2}\ln(1-(\sqrt{\lambda i})^2)+\frac1{2}\ln(1-(\sqrt{-\lambda i})^2)\\
=\atanh\left(\frac{\sqrt{i\lambda}+\sqrt{-i\lambda}}
        {1+\sqrt{i\lambda}\sqrt{-i\lambda}}\right)
    +\frac1{2}\ln(1-i\lambda)+\frac1{2}\ln(1-(-i\lambda))\\
=\atanh\left(\frac{\sqrt{2\lambda}}{1+\lambda}\right)
    +\frac1{2}\ln(1-(i\lambda)^2)\\
=\atanh\left(\frac{\sqrt{2\lambda}}{1+\lambda}\right)+\frac1{2}\ln(1+\lambda^2).
\end{gather*}
By remark~\ref{rem:LogShiftPM}, 
\[
\ln(1+i\sqrt{\lambda\abs{X}})+\ln(1-i\sqrt{\lambda\abs{X}})
=\ln(1-(i\sqrt{\lambda\abs{X}})^2)=\ln(1+\lambda\abs{X}). 
\]
Consequently, 
\[
\E\ln(1+\sqrt{\lambda\abs{X}})+\frac1{2}\E\ln(1+\lambda\abs{X})
=\atanh\left(\frac{\sqrt{2\lambda}}{1+\lambda}\right)+\frac1{2}\ln(1+\lambda^2)
\]
as claimed.
\end{proof}

We use the following lemma to show that $\mu(\lambda)=\Theta(\sqrt{\lambda})$ as well when $\lambda$ is small. 
\begin{lem}\label{lem:BoundsOnAtanhTermXiFirstMoment}
For $\lambda>0$, 
\[
\frac{\sqrt{2\lambda}}{1+\lambda}
<\atanh\left(\frac{\sqrt{2\lambda}}{1+\lambda}\right)
< \frac{\sqrt{2\lambda}}{1+\lambda}
    \left(1+\frac1{2}\ln\Big(1+\frac{2\lambda}{1+\lambda^2}\Big)\right)
<\frac{3}{\sqrt{2}}\frac{\sqrt{\lambda}}{1+\lambda}
\]
and approaches 0 as $\lambda\to\infty$. 
Further, for any $\lambda\leq \lambda_0\leq 1$, 
\[
\frac{\sqrt{2\lambda}}{1+\lambda}
<\atanh\left(\frac{\sqrt{2\lambda}}{1+\lambda}\right)
<\frac{\sqrt{2\lambda}}{1+\lambda}
    \left(1+\frac1{2}\ln\Big(1+\frac{2\lambda_0}{1+\lambda_0^2}\Big)\right). 
\]
\end{lem}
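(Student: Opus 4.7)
The plan is to use the Taylor series $\atanh(x)=\sum_{n=0}^{\infty} x^{2n+1}/(2n+1)$, valid for $|x|<1$. First I would verify that the argument $x:=\sqrt{2\lambda}/(1+\lambda)$ genuinely satisfies $x<1$ for every $\lambda>0$: indeed $(1+\lambda)^2-2\lambda=1+\lambda^2>0$, so $x^2=2\lambda/(1+\lambda)^2<1$ (in fact $x\le 1/\sqrt{2}$, maximum at $\lambda=1$).

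With the series in hand, both bounds are almost immediate. For the lower bound, all terms of $\atanh(x)$ beyond the first are strictly positive when $x>0$, so $\atanh(x)>x$. For the upper bound, since $1/(2n+1)<1$ for $n\ge 1$, termwise comparison with a geometric series yields
\[
\atanh(x)<x\sum_{n=0}^{\infty}x^{2n}=\frac{x}{1-x^2}.
\]
Substituting $x=\sqrt{2\lambda}/(1+\lambda)$ and using $1-x^2=(1+\lambda^2)/(1+\lambda)^2$ gives
\[
\frac{x}{1-x^2}=\frac{\sqrt{2\lambda}\,(1+\lambda)}{1+\lambda^2}=\frac{\sqrt{2\lambda}}{1+\lambda}\cdot\frac{(1+\lambda)^2}{1+\lambda^2}=\frac{\sqrt{2\lambda}}{1+\lambda}\left(1+\frac{2\lambda}{1+\lambda^2}\right),
\]
which is the stated upper bound.

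To get the uniform ceiling $\sqrt{2}$, I would square the upper bound and show $2\lambda(1+\lambda)^2\le 2(1+\lambda^2)^2$ for all $\lambda\ge 0$. Expanding,
\[
(1+\lambda^2)^2-\lambda(1+\lambda)^2=1-\lambda-\lambda^3+\lambda^4=(1-\lambda)(1-\lambda^3)=(1-\lambda)^2(1+\lambda+\lambda^2)\ge 0,
\]
with equality only at $\lambda=1$; thus the expression is $\le\sqrt{2}$ with maximum $\sqrt{2}$ attained at $\lambda=1$. Decay as $\lambda\to\infty$ follows at once from $\sqrt{2\lambda}(1+\lambda)/(1+\lambda^2)\sim\sqrt{2/\lambda}\to 0$.

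Finally, the refined inequality for $\lambda\le\lambda_0\le 1$ reduces to showing that $\lambda\mapsto 2\lambda/(1+\lambda^2)$ is increasing on $[0,1]$; the derivative equals $2(1-\lambda^2)/(1+\lambda^2)^2\ge 0$ there, so replacing $\lambda$ by $\lambda_0$ in the parenthesized factor only enlarges the bound. There is no real obstacle here: the proof is essentially a Taylor-series comparison plus the algebraic identity $(1+\lambda^2)^2-\lambda(1+\lambda)^2=(1-\lambda)^2(1+\lambda+\lambda^2)$, which is the one computational step worth stating explicitly.
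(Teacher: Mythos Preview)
Your proof is correct and follows essentially the same route as the paper: power-series positivity for the lower bound, the estimate $\atanh(u)\le u/(1-u^2)$ (which the paper states as a separate lemma and you derive inline) for the upper bound, and monotonicity of $\lambda\mapsto \lambda/(1+\lambda^2)$ on $[0,1]$ for the refined inequality. Your treatment of the ceiling $\sqrt{2}$ via the factorization $(1+\lambda^2)^2-\lambda(1+\lambda)^2=(1-\lambda)^2(1+\lambda+\lambda^2)$ is in fact more explicit than the paper's own proof, which establishes only that the \emph{input} $\sqrt{2\lambda}/(1+\lambda)$ is maximized at $\lambda=1$ and does not separately verify that the upper-bound expression itself is bounded by $\sqrt{2}$.
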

\begin{rem}\label{rem:MuApproxSmallScales}
By lemma~\ref{lem:XiFirstMoment}, we now also have the bound
\[
\frac{\sqrt{2\lambda}}{1+\lambda}
\leq \mu(\lambda)
\leq \frac{\sqrt{2\lambda}}{1+\lambda}
    \left(1+\frac{\lambda_0}{1+\lambda_0^2}\right)+\frac{\lambda^2}{2}
\leq \frac{3}{\sqrt{2}}\frac{\sqrt{\lambda}}{1+\lambda}+\frac{\lambda^2}{2}. 
\]
using $\ln(1+x)\leq x$ twice. 
\end{rem}
\begin{proof}
The limit for large $\lambda$ is immediate. 
From the power series for $\atanh$, conclude $\atanh(x)>x$ for $x>0$. 
We can also give the upper bound
\begin{gather*}
\atanh(x)=\sum_{j=0}^\infty \frac{x^{2j+1}}{2j+1}
=x\sum_{j=0}^\infty \frac{(x^2)^j}{2j+1}
\leq x\left(1+\frac1{2}\sum_{j=1}^\infty \frac{(x^2)^j}{j}\right)\\
=x\left(1-\frac1{2}\ln(1-x^2)\right). 
\end{gather*}
So, 
\[
\atanh\left(\frac{\sqrt{2\lambda}}{1+\lambda}\right)
\leq \frac{\sqrt{2\lambda}}{1+\lambda}
    \left(1+\frac1{2}\ln\Big(1+\frac{2\lambda}{1+\lambda^2}\Big)\right)
\]
Noting that $\lambda/(1+\lambda^2)$ is strictly increasing for $\lambda\in(0,1)$, we can fix the $\lambda^2$ term at a particular constant.  
\end{proof}

\subsection{Estimating Deviations of the Mean}
We derive the estimates used in the large scale concencentration proofs given above. 
Both differences
\[
\mu((1+\epsilon)\lambda)-\mu(\lambda)
\qtext{and}
\mu(\lambda)-\mu((1+\epsilon)^{-1}\lambda)
\]
are controlled by lemma~\ref{lem:UpperDeviationEstimate} by requiring $\lambda\geq \sqrt{1+\epsilon}$. 
Because 
\[
\mu(\lambda)=\atanh\left(\frac{\sqrt{2\lambda}}{1+\lambda}\right)+\frac1{2}\ln(1+\lambda^2)
\]
both deviations will be sums of two terms, an $\atanh$ term and a $\ln$ term.
\begin{lem}\label{lem:UpperDeviationEstimate}
For $1\leq a$ 
and $1/\sqrt{a}\leq \lambda$,
\[
a-1
>\mu(a\lambda)-\mu(\lambda)
\geq \frac{a-1}{4}(1-(a-1))
\]
\end{lem}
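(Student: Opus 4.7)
The plan is to apply the two preceding lemmas to decompose $\mu(a\lambda) - \mu(\lambda) = L + T$, where
\[
L = \frac{1}{2}\ln\!\left(1 + \frac{(a^2-1)\lambda^2}{1+\lambda^2}\right), \qquad T = \atanh\!\left( (\sqrt{a}-1)\sqrt{2\lambda}\,\frac{1-\lambda\sqrt{a}}{(1-\lambda\sqrt{a})^2 + \lambda(1+a)} \right).
\]
The hypothesis $\lambda \geq 1/\sqrt{a}$ makes the factor $1 - \lambda\sqrt{a} \leq 0$, so the argument of $\atanh$ is $\leq 0$ and hence $T \leq 0$ by oddness and monotonicity. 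The upper bound is then immediate: the observation $\lambda^2/(1+\lambda^2) < 1$ gives $L < \tfrac{1}{2}\ln(a^2) = \ln(a)$, and the elementary bound $\ln(a) \leq a - 1$ (strict for $a > 1$) yields $\mu(a\lambda) - \mu(\lambda) = L + T < a - 1$.

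For the lower bound I assemble three ingredients. First, $\lambda \geq 1/\sqrt{a}$ implies $\lambda^2/(1+\lambda^2) \geq 1/(1+a)$, so
\[
L \geq \frac{1}{2}\ln\!\left(1 + \frac{a^2-1}{1+a}\right) = \frac{1}{2}\ln(a).
\]
Second, an AM--GM estimate on the denominator, $(1-\lambda\sqrt{a})^2 + \lambda(1+a) \geq 2|1-\lambda\sqrt{a}|\sqrt{\lambda(1+a)}$, cancels the $|1-\lambda\sqrt{a}|$ factor in the numerator of the $\atanh$-argument and gives the $\lambda$-uniform bound
\[
\left| (\sqrt{a}-1)\sqrt{2\lambda}\,\frac{1-\lambda\sqrt{a}}{(1-\lambda\sqrt{a})^2 + \lambda(1+a)} \right| \leq \frac{\sqrt{a}-1}{\sqrt{2(1+a)}}.
\]
Third, combining this with lemma~\ref{lem:SquareRootAt1Bounds}'s bound $\sqrt{a} - 1 \leq (a-1)/2$ and with $\sqrt{2(1+a)} \geq 2$ (valid for $a \geq 1$), monotonicity of $\atanh$ yields $|T| \leq \atanh((a-1)/4)$.

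The claim then reduces to the scalar inequality $\tfrac{1}{2}\ln(1+b) - \atanh(b/4) \geq (b - b^2)/4$ for $b := a - 1 \in [0, 1]$. I would establish this by keeping several Taylor orders on both sides: the alternating-pair grouping of the $\ln$-series gives $\tfrac{1}{2}\ln(1+b) \geq b/2 - b^2/4 + b^3/6 - b^4/8$ on $[0,1]$, while tail-bounding the $\atanh$-series gives $\atanh(b/4) \leq b/4 + (b/4)^3/(3(1 - (b/4)^2))$. The resulting polynomial inequality in $b$ has a positive $b^3$ coefficient after the $b$ and $b^2$ terms cancel exactly, and a crude bound on the subleading $b^4$ contribution on $[0,1]$ then closes it. The main obstacle is precisely this last step: because the leading-order $b$ and $b^2$ coefficients of the target and of the naive lower bound match exactly, the Taylor truncations must be carried far enough to witness that the $b^3$-order balance is favorable.
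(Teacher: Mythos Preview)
Your decomposition $\mu(a\lambda)-\mu(\lambda)=L+T$, the upper bound via $T\le 0$ and $L<\ln a\le a-1$, and the AM--GM step giving $|\text{argument of }\atanh|\le w:=\dfrac{\sqrt a-1}{\sqrt{2(1+a)}}$ all match the paper exactly. The approaches diverge only in how $|T|$ is then controlled.

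The paper applies the bound $\atanh(w)\le \dfrac{w}{1-w^2}$ \emph{before} simplifying, and the resulting expression collapses algebraically to $\dfrac{(\sqrt a-1)\sqrt{2(1+a)}}{(1+\sqrt a)^2}\le \dfrac{\sqrt a-1}{2}\le \dfrac{a-1}{4}$, a linear bound. With $|T|\le (a-1)/4$ in hand, the lower bound follows from the single-line lemma $\ln(1+t)\ge t(1-t/2)$ applied to $L\ge\tfrac12\ln a$.

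You instead first weaken $w\le (a-1)/4$ and only then apply $\atanh$, obtaining $|T|\le \atanh\!\big((a-1)/4\big)$, which is strictly larger than $(a-1)/4$. This loss is what forces your third-order Taylor computation for the scalar inequality $\tfrac12\ln(1+b)-\atanh(b/4)\ge (b-b^2)/4$. That inequality is in fact true on $[0,1]$ (and for $a>2$ the claimed lower bound is negative, so monotonicity of $\mu$ handles it trivially --- a case you should mention), so your argument can be completed; but the paper's ordering --- bound $\atanh$ first, simplify afterward --- avoids the whole detour and needs only $\ln(1+t)\ge t-t^2/2$.
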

\begin{proof}
We shall show that for $\lambda\geq 1/\sqrt{a}$, the difference in the $\atanh$ terms is nonpositive. 
We then immediately have the upper bound
\[
\mu(a\lambda)-\mu(\lambda)
\leq \frac1{2}\ln\left(1+\frac{(a^2-1)\lambda^2}{1+\lambda^2}\right)
<\ln(a)\leq a-1.  
\]
On the other hand, because $\lambda\geq 1/\sqrt{a}$, the $\ln$ contribution also has the lower bound
\begin{gather*}
\frac1{2}\ln\left(1+\frac{(a^2-1)\lambda^2}{1+\lambda^2}\right)
\geq \frac1{2}\ln\left(1+\frac{(a^2-1)(1/a)}{1+1/a}\right)
=\frac1{2}\ln\left(1+(a-1)\right)\\
\geq \frac{(a-1)}{2}\left(1-\frac{a-1}{2}\right)
\end{gather*}
using a 2nd order Taylor series with Lagrange remainder in the last line, recalling $a\geq 1$ here.

For the lower bound for $\mu(a\lambda)-\mu(\lambda)$, it remains to control how negative the $\atanh$ contribution is. 
With
\[
u=\frac{\sqrt{2a\lambda}}{1+a\lambda}
\qtext{and}
v=\frac{\sqrt{2\lambda}}{1+\lambda}, 
\]
we can use the atanh addition formula~\ref{lem:ATanhAdditionFormula}, 
\[
\atanh(u)-\atanh(v)=\atanh(u)+\atanh(-v)=\atanh\left(\frac{u+(-v)}{1+u(-v)}\right)
\]
for $u,v\in(-1,1)$, which is the case for us here.
After some simplification, we recover
\[
\frac{u-v}{1-uv}=(\sqrt{a}-1)\sqrt{2\lambda}\frac{1-\lambda\sqrt{a}}{(1-\lambda\sqrt{a})^2+\lambda(1+a)}
\]
which is negative for $\lambda\geq 1/\sqrt{a}$. 
Because atanh is an odd function, taking it of the above gives a negative contribution for such $\lambda$. 
Use the AM-GM inequality to upper bound
\[
-\frac{u-v}{1-uv}\leq \frac{\sqrt{a}-1}{\sqrt{2}\sqrt{1+a}}=:w, 
\]
then use the estimate
\[
\atanh(w)\leq \frac{w}{1-w^2}
=\frac{(\sqrt{a}-1)\sqrt{2}\sqrt{1+a}}{(1+\sqrt{a})^2}
\leq \frac{\sqrt{a}-1}{2}
\]
as the remaining factor is seen to be decreasing for $a\geq 1$ upon taking logarithms. 
Using $\sqrt{a}\leq 1+(a-1)/2$, we finally have. 
\[
\mu(a\lambda)-\mu(\lambda)\geq \frac{(a-1)}{2}\left(1-\frac{a-1}{2}\right)
-\frac{\sqrt{a}-1}{2}
\geq \frac{a-1}{4}(1-(a-1)). 
\]
\end{proof}

\subsection{2nd Moment}
To estimate the 2nd moment $\E\xi^2(\lambda\abs{X})$, note that
for any $a,b>0$, the AM-GM inequality gives $(a+b)^2\leq 2(a^2+b^2)$, so that
\begin{align*}
\E\xi^2(\lambda\abs{X})
&=\E\left(\ln(1+\sqrt{\lambda\abs{X}})+\frac1{2}\ln(1+\lambda\abs{X})\right)^2\\
&\leq \E\left(2\ln^2(1+\sqrt{\lambda\abs{X}})
    +\frac1{2}\ln^2(1+\lambda\abs{X})\right). 
\end{align*}
It turns out this last expression also arises from a contour integral.

\begin{lem}\label{XiSecondMomentEstimate}
If $\lambda>0$ and $X\drawn\Cauchy{1}$, then
\begin{gather*}
\E\left(2\ln^2(1+\sqrt{\lambda\abs{X}})+\frac1{2}\ln^2(1+\lambda\abs{X})\right)\\
=2\E\arctan^2(\sqrt{\lambda\abs{X}})+\mu^2(\lambda)-\big(\arctan(\lambda)-h(\sqrt{\lambda})\big)^2
\end{gather*}
with
\[
h(\sqrt{\lambda})=\frac{\pi}{2}+\arctan\left(\frac{\sqrt{\lambda}}{\sqrt{2}}-\frac1{\sqrt{2\lambda}}\right). 
\]
\end{lem}
\begin{proof}
The computations will be a bit more involved than those for the first moment.
Starting from lemma~\ref{lem:ContourIntegralSetup} with $b=2$,
\begin{align*}
&\E\ln^2(1+\sqrt{\lambda\abs{X}})\\
&=\ln^2(1+\sqrt{i\lambda})+\ln^2(1+\sqrt{-i\lambda})\\
    &\qquad-\frac1{2}\E\ln^2(1+i\sqrt{\lambda\abs{X}})
    -\frac1{2}\E\ln^2(1-i\sqrt{\lambda\abs{X}}), 
\end{align*}
that is,
\begin{gather*}
\E 2\ln^2(1+\sqrt{\lambda\abs{X}})+\E\ln^2(1+i\sqrt{\lambda\abs{X}})
    +\E\ln^2(1-i\sqrt{\lambda\abs{X}})\\
=2\ln^2(1+\sqrt{i\lambda})+2\ln^2(1+\sqrt{-i\lambda}).
\end{gather*}
By lemma~\ref{lem:LogShiftSumOfSquares}, 
\begin{gather*}
\E\ln^2(1+i\sqrt{\lambda\abs{X}})+\E\ln^2(1-i\sqrt{\lambda\abs{X}})\\
=\E\frac1{2}\ln^2(1+(\sqrt{\lambda\abs{X}})^2)
    -2\E\arctan^2(\sqrt{\lambda\abs{X}})\\
=\E\frac1{2}\ln^2(1+\lambda\abs{X})-2\E\arctan^2(\sqrt{\lambda\abs{X}}). 
\end{gather*}
For the residue terms, we use lemma~\ref{lem:Residues2ndMomentLogSquareRoot}:
\begin{gather*}
2\ln^2(1+\sqrt{i\lambda})+2\ln^2(1+\sqrt{-i\lambda})\\
=\frac1{4}\ln^2(1+\lambda^2)-\arctan^2(\lambda)\\
    \qquad+\ln(1+\lambda^2)g(\sqrt{\lambda})+2\arctan(\lambda)h(\sqrt{\lambda})+g^2(\sqrt{\lambda})-h^2(\sqrt{\lambda})
\end{gather*}
with
\[
g(\sqrt{\lambda})=\atanh\left(\frac{\sqrt{2\lambda}}{1+\lambda}\right)
\qtext{and}
h(\sqrt{\lambda})=\frac{\pi}{2}+\arctan\left(\frac{\sqrt{\lambda}}{\sqrt{2}}-\frac1{\sqrt{2\lambda}}\right). 
\]
Recalling our computation of $\mu(\lambda)$ in lemma~\ref{lem:XiFirstMoment}, 
we can further simplify:
\begin{gather*}
2\ln^2(1+\sqrt{i\lambda})+2\ln^2(1+\sqrt{-i\lambda})\\
=\left(\frac1{4}\ln^2(1+\lambda^2)+\ln(1+\lambda^2)g(\sqrt{\lambda})+g^2(\sqrt{\lambda})\right)\\
    \qquad-\arctan^2(\lambda)+2\arctan(\lambda)h(\sqrt{\lambda})-h^2(\sqrt{\lambda})\\
=\mu^2(\lambda)-\arctan^2(\lambda)+2\arctan(\lambda)h(\sqrt{\lambda})-h^2(\sqrt{\lambda})
\end{gather*}
Putting everything together we may conclude
\begin{gather*}
\E\left(2\ln^2(1+\sqrt{\lambda\abs{X}})+\frac1{2}\ln^2(1+\lambda\abs{X})\right)\\
=2\E\arctan^2(\sqrt{\lambda\abs{X}})+\mu^2(\lambda)-\big(\arctan(\lambda)-h(\sqrt{\lambda})\big)^2.
\end{gather*}
\end{proof}

\begin{cor}[The Variance Is Bounded]\label{rem:VarianceBounded}
For $\lambda>0$ and $X\drawn\Cauchy{1}$, 
\[
\Var(\xi(\lambda\abs{X}))
\leq \min\set{2\E\ln(1+\lambda\abs{X}),\frac{\pi^2}{2}}. 
\]
\end{cor}
\begin{proof}
Just note that for $\nu>0$, 
\[
\arctan^2(\sqrt{\nu})\leq \min\set{\ln(1+\nu),\frac{\pi^2}{4}}. 
\]
The constant follows from $\arctan(x)\leq \pi/2$ for all $x\in\R$, while the $\ln(1+\nu)$ bound follows from comparing derivatives, noting that both functions take 0 when $\nu=0$. 
\end{proof}

For quantitative estimates for the 2nd moment and the variance, we make the $\E\ln(1+\lambda\abs{X})$ term explicit in the above bound. 
\begin{lem}\label{lem:ExplicitEpxpectationLogLinearSimplerFunctions}
For $\lambda\geq 0$ and $X\drawn\Cauchy{1}$, 
\[
\E\ln(1+\lambda\abs{X})
=-\frac{2}{\pi}\ln(\lambda)\arctan(\lambda)+\frac1{2}\ln(1+\lambda^2)
    +\frac{2}{\pi}\Ti_2(\lambda). 
\]
\end{lem}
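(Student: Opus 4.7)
The plan is to compute the integral
\[
I(\lambda) := \E\ln(1+\lambda\abs{X}) = \frac{2}{\pi}\int_0^\infty \frac{\ln(1+\lambda x)}{1+x^2}\,dx
\]
by differentiating under the integral sign, then re-integrating with respect to $\lambda$. I would first note $I(0) = 0$, which will serve as the initial condition, and that $I$ is differentiable in $\lambda > 0$ by standard dominated convergence arguments since $x/((1+\lambda x)(1+x^2))$ decays like $1/x^2$.

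Differentiating formally gives
\[
I'(\lambda) = \frac{2}{\pi}\int_0^\infty \frac{x}{(1+\lambda x)(1+x^2)}\,dx,
\]
and the core computation is the partial fraction decomposition
\[
\frac{x}{(1+\lambda x)(1+x^2)}
= \frac{1}{1+\lambda^2}\left(-\frac{\lambda}{1+\lambda x} + \frac{x}{1+x^2} + \frac{\lambda}{1+x^2}\right).
\]
Integrating from $0$ to $R$ produces the combination $-\ln(1+\lambda R) + \tfrac{1}{2}\ln(1+R^2) + \lambda\arctan(R)$. The first two terms individually diverge, but their sum tends to $-\ln(\lambda)$ as $R \to \infty$; tracking this cancellation carefully is the one delicate bookkeeping step. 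In the limit I obtain
\[
I'(\lambda) = \frac{-2\ln(\lambda)}{\pi(1+\lambda^2)} + \frac{\lambda}{1+\lambda^2}.
\]

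Next I would integrate $I'$ from $0$ to $\lambda$. The second piece contributes $\tfrac{1}{2}\ln(1+\lambda^2)$ immediately. For the first piece, integration by parts with $u = -\tfrac{2}{\pi}\ln(t)$ and $dv = dt/(1+t^2)$ yields
\[
\int_0^\lambda \frac{-2\ln(t)}{\pi(1+t^2)}\,dt
= -\frac{2}{\pi}\ln(\lambda)\arctan(\lambda) + \frac{2}{\pi}\int_0^\lambda \frac{\arctan(t)}{t}\,dt,
\]
where the boundary term at $t=0$ vanishes since $\arctan(t) \sim t$ kills the $\ln(t)$ singularity. Recognising the remaining integral as $\Ti_2(\lambda)$ (the inverse tangent integral) and summing the pieces reproduces the claimed formula.

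The most error-prone step is the divergent cancellation at $R \to \infty$; the rest is straightforward. An alternative route would be to adapt the contour-integral machinery of Lemma~\ref{lem:ContourIntegralSetup} (with $\alpha = 1$ instead of $\alpha = 1/2$), but one would then have to identify the $\Ti_2$ term from the residue structure, which is no easier than the direct calculus approach above.
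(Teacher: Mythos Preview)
Your proof is correct and takes a genuinely different route from the paper's. The paper starts from the polylogarithm representation of Lemma~\ref{lem:ExactMomentsLogLinear},
\[
\E\ln(1+\lambda\abs{X})=\frac{1}{i\pi}\bigl(\Li_2(1+i\lambda)-\Li_2(1-i\lambda)\bigr),
\]
and then applies the dilogarithm reflection formula (Lemma~\ref{lem:DilogarithmRelationShift1}) to rewrite $\Li_2(1\pm i\lambda)$ in terms of $\Li_2(\pm i\lambda)$ and logarithmic products; the imaginary parts then assemble into $\arctan$, $\ln(1+\lambda^2)$, and $\Ti_2(\lambda)$ via the definitions in Appendix~\ref{app}. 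Your approach is entirely real-variable: differentiate under the integral, evaluate $I'(\lambda)$ by partial fractions (with the cancellation of the two logarithmic divergences at $R\to\infty$ handled cleanly), and re-integrate using integration by parts to produce $\Ti_2$. Your method is more elementary and self-contained, requiring no complex analysis or dilogarithm identities; the paper's method has the advantage of fitting into its broader polylogarithm framework (Lemmas~\ref{lem:ExactMomentsLogLinear} and~\ref{lem:MomentsLogSquareRoot} treat all moments $b>-1$ uniformly), so the $b=1$ case here is just a specialization followed by simplification. Both are valid; yours would arguably be the more natural standalone proof if one did not already have the $\Li_{b+1}$ machinery in hand.
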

\begin{proof}
From lemma~\ref{lem:ExactMomentsLogLinear}
\begin{gather*}
\E\ln(1+\lambda\abs{X})
=\frac{2}{\pi}\frac1{2i}(\Li_2(1+i\lambda)-\Li_2(1-i\lambda))
\end{gather*}
We use the reflection formula~\ref{lem:DilogarithmRelationShift1} to expand the dilogarithm terms.

Recall from lemma~\ref{lem:DilogarithmRelationShift1}, for $z\in(\C-\R)\cup(0,1)$, 
\[
\Li_2(z)+\Li_2(1-z)-\Li_2(1)=-\ln(z)\ln(1-z). 
\]
Consequently, using definition~\ref{defn:InverseTangentIntegral} for $\Ti_2$,
\begin{gather*}
\frac1{2i}(\Li_2(1+i\lambda)-\Li_2(1-i\lambda))\\
=\frac1{2i}
    \big(-\ln(-i\lambda)\ln(1+i\lambda)-\Li_2(-i\lambda)+\Li_2(1)\big)\\
    \quad-\frac1{2i}\big(-\ln(i\lambda)\ln(1-i\lambda)-\Li_2(i\lambda)
        +\Li_2(1)\big)\\
=\frac1{2i}\ln(\lambda)(\ln(1-i\lambda)-\ln(1+i\lambda))
    +\frac{\pi}{4}(\ln(1-i\lambda)+\ln(1+i\lambda))+\Ti_2(\lambda)
\end{gather*}
By lemma~\ref{lem:PolyLogInputSquared} (really the remark there) and the definition of arctan, 
\[
\frac1{2i}(\Li_2(1+i\lambda)-\Li_2(1-i\lambda))
=-\ln(\lambda)\arctan(\lambda)+\frac{\pi}{4}\ln(1+\lambda^2)+\Ti_2(\lambda). 
\]
Thus, 
\begin{gather*}
\E\ln(1+\lambda\abs{X})
=\frac{2}{\pi}\frac1{2i}(\Li_2(1+i\lambda)-\Li_2(1-i\lambda))\\
=-\frac{2}{\pi}\ln(\lambda)\arctan(\lambda)+\frac1{2}\ln(1+\lambda^2)
    +\frac{2}{\pi}\Ti_2(\lambda).
\end{gather*}
\end{proof}

\begin{cor}\label{cor:2ndMomentRatioBound}
For $0<\lambda<2$ 
\begin{gather*}
\frac{\E\xi^2(\lambda\abs{X})}{\mu(\lambda)^2}
\leq\begin{cases}
1+2\left(\lambda+\frac{4}{\pi}\big(1-\ln(\lambda)\big)\right)&\text{for $\lambda\in(0,1]$}\\
1+\frac{9}{2}\left(\lambda+\frac{4}{\pi}\right)&\text{for $\lambda\in(1,2)$}
\end{cases}. 
\end{gather*}
\end{cor}
\begin{proof}
By corollary~\ref{rem:VarianceBounded} and lemma~\ref{lem:ExplicitEpxpectationLogLinearSimplerFunctions}, 
we have
\begin{gather*}
\E\xi^2(\lambda\abs{X})\leq \ln(1+\lambda^2)+\frac{4}{\pi}\Ti_2(\lambda)-\frac{4}{\pi}\ln(\lambda)\arctan(\lambda)+\mu^2(\lambda)\\
\leq \lambda^2+\frac{4}{\pi}\lambda-\frac{4}{\pi}\lambda\ln(\lambda)
    +\mu^2(\lambda)
\end{gather*}
because $\Ti_2(\lambda)$ is an alternating series with terms of decreasing magnitude for $\lambda<2$ and that for $\lambda\leq 1$, $\ln(\lambda)$ is nonnegative.
For $\lambda\in(1,2)$, we can drop the $\ln(\lambda)$ term for an upper bound.
Consequently, using $\mu(\lambda)\geq \sqrt{2\lambda}/(1+\lambda)$ from remark~\ref{rem:MuApproxSmallScales}, 
\[
\frac{\E\xi^2(\lambda\abs{X})}{\mu(\lambda)^2}
\leq 1+2\left(\lambda+\frac{4}{\pi}\big(1-\ln(\lambda)\big)\right)
\]
for $\lambda\leq 1$, 
and
\[
\frac{\E\xi^2(\lambda\abs{X})}{\mu(\lambda)^2}
\leq 1+\frac{9}{2}\left(\lambda+\frac{4}{\pi}\right)
\]
for $\lambda\in(1,2)$. 
\end{proof}

\begin{lem}\label{lem:LogShiftSumOfSquares}
For $r>0$, 
\[
\ln^2(1+i r)+\ln^2(1-i r)=\frac1{2}\ln^2(1+r^2)-2\arctan^2(r). 
\]
\end{lem}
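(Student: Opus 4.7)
The strategy is a direct computation: write each complex logarithm in rectangular form, square it, and add. Because $1 \pm ir$ are complex conjugates, the two squared logarithms will themselves be complex conjugates, so their sum is twice the real part of either one, and the imaginary cross-term drops out automatically.

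More concretely, I would begin by noting that for $r > 0$, the complex number $1 + ir$ has modulus $\sqrt{1+r^2}$ and argument $\arctan(r) \in (0, \pi/2)$, which lies in the principal branch of the logarithm. Therefore
\[
\ln(1+ir) = \tfrac{1}{2}\ln(1+r^2) + i\arctan(r),
\qquad
\ln(1-ir) = \tfrac{1}{2}\ln(1+r^2) - i\arctan(r),
\]
the second equality following by conjugation (or by the same polar computation with argument $-\arctan(r)$).

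Setting $a := \tfrac{1}{2}\ln(1+r^2)$ and $b := \arctan(r)$, expansion gives
\[
\ln^2(1+ir) = (a+ib)^2 = a^2 - b^2 + 2iab,
\qquad
\ln^2(1-ir) = (a-ib)^2 = a^2 - b^2 - 2iab.
\]
Adding these cancels the imaginary parts and yields
\[
\ln^2(1+ir) + \ln^2(1-ir) = 2a^2 - 2b^2 = \tfrac{1}{2}\ln^2(1+r^2) - 2\arctan^2(r),
\]
which is the claimed identity. There is no real obstacle here — the only point requiring care is ensuring that $\arctan(r)$ is the correct value of the argument on the principal branch, which is automatic since $r > 0$ places $1 \pm ir$ strictly in the right half-plane (away from the branch cut $(-\infty,0]$).
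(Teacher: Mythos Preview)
Your proof is correct and matches the paper's own argument almost verbatim: the paper also observes that the two terms are complex conjugates, writes $\ln(1+ir)=\tfrac12\ln(1+r^2)+i\arctan(r)$, and computes $2\Re\big(\tfrac12\ln(1+r^2)+i\arctan(r)\big)^2$. The only cosmetic difference is that the paper phrases the cancellation as ``twice the real part'' rather than expanding both squares separately.
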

\begin{proof}
We are adding complex conjugates, so the left hand side is
\begin{align*}
2\Re\ln^2(1+ir)
&=2\Re\left(\frac1{2}\ln(1+r^2)+i\arctan(r)\right)^2\\
&=2\left(\frac1{4}\ln^2(1+r^2)-\arctan^2(r)\right)
=\frac1{2}\ln^2(1+r^2)-2\arctan^2(r). 
\end{align*}
\end{proof}

\begin{lem}\label{lem:Residues2ndMomentLogSquareRoot}
For $\nu>0$, 
\begin{align*}
&\ln^2(1+\nu\sqrt{i})+\ln^2(1+\nu\sqrt{-i})\\
&=\frac1{8}\ln^2(1+\nu^4)-\frac1{2}\arctan^2(\nu^2)\\
    &\qquad+\frac1{2}\ln(1+\nu^4)g(\nu)+\arctan(\nu^2)h(\nu)
    +\frac1{2}(g^2(\nu)-h^2(\nu))
\end{align*}
with
\[
g(\nu)=\atanh\left(\frac{\nu\sqrt{2}}{1+\nu^2}\right)
\qtext{and}
h(\nu)=\frac{\pi}{2}+\arctan\left(\frac{\nu}{\sqrt{2}}-\frac1{\nu\sqrt{2}}\right). 
\]
\end{lem}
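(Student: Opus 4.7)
The plan is to exploit conjugate symmetry. Under the principal branch, $\sqrt{-i}=\overline{\sqrt{i}}$, so the logarithms $L_{\pm}:=\ln(1+\nu\sqrt{\pm i})$ are complex conjugates. Writing $L_+=a+ib$ with $a,b\in\mathbb{R}$, a direct expansion gives
\[
\ln^2(1+\nu\sqrt{i})+\ln^2(1+\nu\sqrt{-i})=(a+ib)^2+(a-ib)^2=2a^2-2b^2,
\]
so everything reduces to identifying $2a=L_++L_-$ and $2b$ in closed form.

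For $2a$, I apply lemma~\ref{lem:XiFirstMoment} with $\lambda=\nu^2$ to obtain
\[
L_++L_-=\atanh\!\left(\frac{\nu\sqrt{2}}{1+\nu^2}\right)+\frac{1}{2}\ln(1+\nu^4)=g(\nu)+\frac{1}{2}\ln(1+\nu^4).
\]
Squaring yields $2a^2=\tfrac{1}{8}\ln^2(1+\nu^4)+\tfrac{1}{2}g(\nu)\ln(1+\nu^4)+\tfrac{1}{2}g^2(\nu)$, which produces the first, third, and $g^2$-part of the claimed right-hand side.

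For $2b$, note $2ib=L_+-L_-=\ln\bigl((1+\nu\sqrt{i})/(1+\nu\sqrt{-i})\bigr)$. Multiplying numerator and denominator by $(1+\nu\sqrt{i})$ and using the real factorization $(1+\nu\sqrt{i})(1+\nu\sqrt{-i})=1+\nu\sqrt{2}+\nu^2>0$, I reduce to $2b=\arg\bigl((1+\nu\sqrt{i})^2\bigr)$. Expanding $(1+\nu\sqrt{i})^2=(1+\nu\sqrt{2})+i\nu(\sqrt{2}+\nu)$ gives $\tan(2b)=\nu(\sqrt{2}+\nu)/(1+\nu\sqrt{2})$. I then verify the identity $2b=h(\nu)-\arctan(\nu^2)$ via the tangent addition formula applied to $\tan(2b+\arctan(\nu^2))$: after simplification, numerator and denominator share the common factor $1+\nu\sqrt{2}+\nu^2$, and what survives is $\nu\sqrt{2}/(1-\nu^2)=\tan(h(\nu))$. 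To rule out a $\pi$ ambiguity, I check the value at $\nu=1$ (both sides equal $\pi/2$) together with continuity and the limits as $\nu\to 0^+$ and $\nu\to\infty$ (both sides tend to $0$ and to $\pi$ respectively). Squaring delivers $2b^2=\tfrac{1}{2}h^2(\nu)-h(\nu)\arctan(\nu^2)+\tfrac{1}{2}\arctan^2(\nu^2)$, which supplies the remaining terms.

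Assembling $2a^2-2b^2$ reproduces the claimed identity. The main obstacle I anticipate is the angle-equality step, namely confirming $2b=h(\nu)-\arctan(\nu^2)$ on the nose and not merely modulo $\pi$; the specific $\pi/2$ offset in the definition of $h$, together with the sign change of $(\nu^2-1)/(\nu\sqrt{2})$ at $\nu=1$, is precisely what is needed so that both sides define the same continuous, single-valued branch of the argument on $(0,\infty)$. Everything else is bookkeeping driven by the algebraic identity $(1+\nu\sqrt{2}+\nu^2)(1-\nu\sqrt{2}+\nu^2)=1+\nu^4$ implicit in the factorizations above.
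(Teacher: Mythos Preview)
Your approach is correct and somewhat more direct than the paper's. Both routes ultimately compute the real and imaginary parts of $L_+=\ln(1+\nu\sqrt{i})$ and assemble $2a^2-2b^2$, but they get there differently. The paper first applies the decomposition $\ln(1+z)=\atanh(z)+\tfrac12\ln(1-z^2)$ (lemma~\ref{lem:LogToAtanhLogSplit}), expands the squares, and groups into three blocks; $g$ and $h$ then appear as the sum and (scaled) difference of $\atanh(\nu\sqrt{\pm i})$, with the closed form for $h$ obtained in a separate lemma (\ref{lem:HNuAcrossSingularity}) by passing to $\arctan$ and matching derivatives across the $\nu=1$ singularity. You instead quote the real part directly from the computation inside the proof of lemma~\ref{lem:XiFirstMoment} (note: it is the proof, not the statement, that contains the identity you need), and recover the imaginary part by computing $\arg\bigl((1+\nu\sqrt{i})^2\bigr)$ and identifying it with $h(\nu)-\arctan(\nu^2)$ via the tangent addition formula. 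Your route avoids the $\atanh$ machinery and the auxiliary lemma; the paper's route is more structural and handles the branch issue by a derivative-matching argument rather than a value check.

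Two small arithmetic slips in your branch verification, neither of which affects the argument: at $\nu=1$ both sides equal $\pi/4$ (not $\pi/2$), since $(1+\sqrt{i})^2=(1+\sqrt{2})(1+i)$ has argument $\pi/4$ while $h(1)-\arctan(1)=\pi/2-\pi/4$; and as $\nu\to\infty$ both sides tend to $\pi/2$ (not $\pi$), since $2\arg(1+\nu\sqrt{i})\to 2\cdot\pi/4$ and $h(\nu)-\arctan(\nu^2)\to\pi-\pi/2$. With those corrected, your continuity-plus-endpoint check pins down the branch just as well as the paper's derivative argument.
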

\begin{proof}
Using lemma~\ref{lem:LogToAtanhLogSplit},
\begin{align*}
&\ln^2(1+\nu\sqrt{i})
=\left(\atanh(\nu\sqrt{i})+\frac1{2}\ln(1-i\nu^2)\right)^2\\
&=\atanh^2(\nu\sqrt{i})+\atanh(\nu\sqrt{i})\ln(1-i\nu^2)+\frac1{4}\ln^2(1-i\nu^2)
\end{align*}
and similarly
\begin{align*}
&\ln^2(1+\nu\sqrt{-i})
=\left(\atanh(\nu\sqrt{-i})+\frac1{2}\ln(1+i\nu^2)\right)^2\\
&=\atanh^2(\nu\sqrt{-i})+\atanh(\nu\sqrt{-i})\ln(1+i\nu^2)+\frac1{4}\ln^2(1+i\nu^2)
\end{align*}
Adding yields several terms:
\begin{align*}
\boxed{1}(\nu)&:=\frac1{4}\ln^2(1+i\nu^2)+\frac1{4}\ln^2(1-i\nu^2)\\
\boxed{2}(\nu)&:=\atanh(\nu\sqrt{i})\ln(1-i\nu^2)+\atanh(\nu\sqrt{-i})\ln(1+i\nu^2)\\
\boxed{3}(\nu)&:=\atanh^2(\nu\sqrt{i})+\atanh^2(\nu\sqrt{-i})
\end{align*}

From lemma~\ref{lem:LogShiftSumOfSquares}, 
\[
\boxed{1}(\nu)=\frac1{4}\left(\frac1{2}\ln^2(1+\nu^4)-2\arctan^2(\nu^2)\right)
=\frac1{8}\ln^2(1+\nu^4)-\frac1{2}\arctan^2(\nu^2). 
\]
We also have
\begin{align*}
\boxed{2}(\nu)
&=\atanh(\nu\sqrt{i})\left(\frac1{2}\ln(1+\nu^4)-i\arctan(\nu^2)\right)\\
    &\qquad
    +\atanh(\nu\sqrt{-i})\left(\frac1{2}\ln(1+\nu^4)+i\arctan(\nu^2)\right)\\
&=\frac1{2}\ln(1+\nu^4)\big(\atanh(\nu\sqrt{i})+\atanh(\nu\sqrt{-i})\big)\\
    &\qquad-i\arctan(\nu^2)\big(\atanh(\nu\sqrt{i})-\atanh(\nu\sqrt{-i})\big)\\
&=\frac1{2}\ln(1+\nu^4)g(\nu)+\arctan(\nu^2)h(\nu). 
\end{align*}

Let 
\begin{align*}
g(\nu)&:=\atanh(\nu\sqrt{i})+\atanh(\nu\sqrt{-i})\\
&=\atanh\left(\frac{\nu(\sqrt{i}+\sqrt{-i})}{1+\nu^2\sqrt{-i^2}}\right)
=\atanh\left(\frac{\nu\sqrt{2}}{1+\nu^2}\right)
\end{align*}
by the atanh addition formula~\ref{lem:ATanhAdditionFormula}, as $\sqrt{\pm i}=(1\pm i)/\sqrt{2}$ are conjugates of each other.

Let 
\[
h(\nu):=-i\big(\atanh(\nu\sqrt{i})-\atanh(\nu\sqrt{-i})\big). 
\]
Then 
\begin{align*}
&g^2(\nu)-h^2(\nu)\\
&=\atanh^2(\nu\sqrt{i})+\atanh^2(\nu\sqrt{-i})+2\atanh(\nu\sqrt{i})\atanh(\nu\sqrt{-i})\\
    &\qquad+\big(\atanh(\nu\sqrt{i})-\atanh(\nu\sqrt{-i})\big)^2\\
&=2\big(\atanh^2(\nu\sqrt{i})+\atanh^2(\nu\sqrt{-i})\big)
=2\boxed{3}(\nu). 
\end{align*}
So we are left to understand $h(\nu)$. 
By lemma~\ref{lem:HNuAcrossSingularity}, it is
\[
h(\nu)
=\frac{\pi}{2}+\arctan\left(\frac{\nu}{\sqrt{2}}-\frac1{\nu\sqrt{2}}\right). 
\]
\end{proof}

\begin{lem}\label{lem:HNuAcrossSingularity}
For $\nu>0$,
\[
h(\nu):=-i\big(\atanh(\nu\sqrt{i})-\atanh(\nu\sqrt{-i})\big)
=\frac{\pi}{2}+\arctan\left(\frac{\nu}{\sqrt{2}}-\frac1{\nu\sqrt{2}}\right). 
\]
\end{lem}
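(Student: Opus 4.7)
The plan is to collapse the two $\atanh$ terms into a single argument computation by exploiting conjugation symmetry. First, I would note that $\sqrt{i}=(1+i)/\sqrt{2}$ and $\sqrt{-i}=(1-i)/\sqrt{2}$ are complex conjugates on the principal branch, and $\nu\sqrt{i}\in\C\setminus((-\infty,-1]\cup[1,\infty))$ for $\nu>0$, so the principal branch of $\atanh$ satisfies $\atanh(\nu\sqrt{-i})=\atanh(\overline{\nu\sqrt{i}})=\overline{\atanh(\nu\sqrt{i})}$. Hence $\atanh(\nu\sqrt{i})-\atanh(\nu\sqrt{-i})=2i\Im\atanh(\nu\sqrt{i})$ is purely imaginary, showing that $h(\nu)=2\Im\atanh(\nu\sqrt{i})$ is real, as the claimed right-hand side requires.

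Next, I would rewrite using $\atanh(z)=\tfrac{1}{2}\log((1+z)/(1-z))$, so that
\[
h(\nu)=\Im\log\!\left(\frac{1+\nu\sqrt{i}}{1-\nu\sqrt{i}}\right)=\arg\!\left(\frac{1+\nu\sqrt{i}}{1-\nu\sqrt{i}}\right).
\]
To compute this argument, multiply numerator and denominator by $\overline{1-\nu\sqrt{i}}=1-\nu\sqrt{-i}$; the denominator becomes $\abs{1-\nu\sqrt{i}}^2>0$, which does not affect the argument. Using $\sqrt{i}\cdot\sqrt{-i}=\abs{\sqrt{i}}^2=1$ and $\sqrt{i}-\sqrt{-i}=i\sqrt{2}$, the numerator simplifies to
\[
(1+\nu\sqrt{i})(1-\nu\sqrt{-i})=1+\nu(\sqrt{i}-\sqrt{-i})-\nu^2=(1-\nu^2)+i\nu\sqrt{2}.
\]
Therefore $h(\nu)=\arg\bigl((1-\nu^2)+i\nu\sqrt{2}\bigr)$.

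Finally, since $\nu\sqrt{2}>0$ for $\nu>0$, the point $(1-\nu^2)+i\nu\sqrt{2}$ always lies strictly in the open upper half-plane, so its principal argument lies in $(0,\pi)$. For any $x+iy$ with $y>0$, the identity $\arg(x+iy)=\tfrac{\pi}{2}+\arctan(-x/y)$ holds uniformly (including across $x=0$), because $\tan(\pi/2+\phi)=-\cot\phi$. Applying this with $x=1-\nu^2$ and $y=\nu\sqrt{2}$ yields
\[
h(\nu)=\frac{\pi}{2}+\arctan\!\left(\frac{\nu^2-1}{\nu\sqrt{2}}\right)=\frac{\pi}{2}+\arctan\!\left(\frac{\nu}{\sqrt{2}}-\frac{1}{\nu\sqrt{2}}\right),
\]
as claimed. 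The only delicate point is handling the crossing $\nu=1$, where $1-\nu^2$ changes sign; this is precisely where a naive application of $\arctan(y/x)$ for the argument would break down. Keeping the argument expressed through the $\pi/2+\arctan(-x/y)$ form sidesteps this, since the imaginary part $\nu\sqrt{2}$ is bounded away from zero throughout, and this is the main technical subtlety of the proof.
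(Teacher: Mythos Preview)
Your argument is correct and cleaner than the paper's. The paper converts both $\atanh$ terms to $\arctan$ via $\atanh(z)=-i\arctan(iz)$, then applies the $\arctan$ inversion formula $\arctan(w)+\arctan(1/w)=\pi/2$ to peel off the $\pi/2$, leaving a sum $\arctan(i\nu\sqrt{i})+\arctan(-i\sqrt{i}/\nu)$ which it identifies with $-\arctan\!\big(\tfrac{\nu}{\sqrt{2}}-\tfrac{1}{\nu\sqrt{2}}\big)$ by checking that both vanish at $\nu=1$ and have matching derivatives $-\sqrt{2}(1+\nu^2)/(1+\nu^4)$. By contrast, you exploit the conjugation symmetry $\atanh(\nu\sqrt{-i})=\overline{\atanh(\nu\sqrt{i})}$ to recognize $h(\nu)=2\Im\atanh(\nu\sqrt{i})=\arg\!\big((1-\nu^2)+i\nu\sqrt{2}\big)$ directly, and then read off the answer using the upper-half-plane identity $\arg(x+iy)=\tfrac{\pi}{2}+\arctan(-x/y)$. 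Your route avoids any differentiation and makes the role of the $\nu=1$ crossing transparent (the imaginary part $\nu\sqrt{2}$ never vanishes), whereas the paper's derivative-matching step obscures why the formula is continuous there. The only small point worth making explicit is that $\arg(1+\nu\sqrt{i})-\arg(1-\nu\sqrt{i})\in(0,\pi)$, so no $2\pi$ correction is needed when passing from $\Im\bigl(\log(1+z)-\log(1-z)\bigr)$ to $\arg\!\big((1+z)/(1-z)\big)$; you implicitly use this, and it holds because $1+\nu\sqrt{i}$ lies in the open first quadrant while $1-\nu\sqrt{i}$ lies in the open lower half-plane.
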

\begin{rem}
For $\nu<1$, we can rewrite the above as
\[
\frac{\pi}{2}-\arctan\left(\frac{1-\nu^2}{\nu\sqrt{2}}\right)
=\arctan\left(\frac{\nu\sqrt{2}}{1-\nu^2}\right). 
\]
\end{rem}
\begin{proof}
We cannot directly use the atanh addition formula because there is a singularity when $\nu$ crosses 1. 
However, by definition of atanh~\ref{defn:ATanh}, we can convert $h(\nu)$ as follows, using $\sqrt{-i}=-i\sqrt{i}$ 
\begin{align*}
h(\nu)&:=-i\big(\atanh(\nu\sqrt{i})-\atanh(\nu\sqrt{-i})\big)\\
&=-i\big(-i\arctan(i\nu\sqrt{i})-(-i)\arctan(i\nu\sqrt{-i})\big)\\
&=-\arctan(i\nu\sqrt{i})+\arctan(\nu\sqrt{i}). 
\end{align*}
We now use the inversion formula~\ref{lem:ArcTanInversionFormula} for $\arctan$. 
\begin{align*}
h(\nu)&=-\arctan(i\nu\sqrt{i})+\frac{\pi}{2}-\arctan(1/(\nu\sqrt{i}))\\
&=\frac{\pi}{2}-\big(\arctan(i\nu\sqrt{i})+\arctan(-i\sqrt{i}/\nu)\big)
\end{align*}
The following identity holds
\[
\big(\arctan(i\nu\sqrt{i})+\arctan(-i\sqrt{i}/\nu)\big)
=-\arctan\left(\frac{\nu}{\sqrt{2}}-\frac1{\nu\sqrt{2}}\right), 
\]
because both analytic expressions are 0 at $\nu=1$, and their derivatives match for $\nu>0$. 
\end{proof}

\section{Polylogarithms and Their Friends}\label{app}
The polylogarithms $\Li_b(z)$ arise when we compute or estimate the first and second moments of the coordinate projections; they will help us give quantitative bounds which are needed in some of the proofs. 
References for polylogarithms are~\citep{LewinPolylogarithms1981} and~\citep{MaximonDilogarithm2003}. 

As initial motivation for studying such functions, we have the following lemma.
\begin{lem}\label{lem:ExactMomentsLogLinear}
Let $X\drawn\Cauchy{1}$ and $\nu>0$. 
Then for $b>-1$, 
\[
\E\ln^b(1+\nu\abs{X})
=\frac{\Gamma(b+1)}{i\pi}(\Li_{b+1}(1+i\nu)-\Li_{b+1}(1-i\nu)). 
\]
\end{lem}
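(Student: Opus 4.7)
The plan is to convert the expectation into an integral that matches the standard integral representation of the polylogarithm.

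First, I would use the symmetry of the Cauchy density to reduce to an integral on $(0,\infty)$, then substitute $u=\ln(1+\nu x)$ to re-express the integrand in exponential form. A short manipulation of $1+x^2$ under this substitution yields
\[
\E\ln^b(1+\nu\abs{X})=\frac{2}{\pi}\int_0^\infty\frac{\nu u^b e^u}{\nu^2+(e^u-1)^2}\,du.
\]

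Next, I would factor the denominator as $(e^u-(1+i\nu))(e^u-(1-i\nu))$ and apply partial fractions. The $\nu$ in the numerator combines with the $1/(2i\nu)$ from the decomposition to produce a difference of two terms of the form $e^u/(e^u-w)$ with $w=1\pm i\nu$. Each such term, integrated against $u^b$, would diverge at infinity, so I would first rewrite $e^u/(e^u-w)=1+w/(e^u-w)$ so that the constants cancel in the subtraction. This yields an absolutely convergent integral of $u^b$ against $(1+i\nu)/(e^u-(1+i\nu))-(1-i\nu)/(e^u-(1-i\nu))$, which can then safely be split into two pieces.

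Finally, I would apply the integral representation
\[
\Li_s(z)=\frac{z}{\Gamma(s)}\int_0^\infty\frac{t^{s-1}}{e^t-z}\,dt
\]
to each piece with $s=b+1$ and $z=1\pm i\nu$. The main obstacle is justifying this representation, since $\abs{1\pm i\nu}=\sqrt{1+\nu^2}>1$, whereas the usual derivation from the geometric expansion $z/(e^t-z)=\sum_{k\geq 1}z^k e^{-kt}$ is only valid for $\abs{z}<1$. I would extend it by analytic continuation: both sides are holomorphic in $z\in\C\setminus[1,\infty)$ (the integral converges because $e^t\neq z$ for $t\geq 0$ whenever $z\notin[1,\infty)$) and they agree on the unit disk, so by the identity theorem on the connected cut plane they agree throughout, in particular at $z=1\pm i\nu$ for $\nu>0$.
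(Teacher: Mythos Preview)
Your proposal is correct and follows essentially the same route as the paper: the paper also reduces to $\frac{2\nu}{\pi}\int_0^\infty \frac{t^b e^t}{\nu^2+(e^t-1)^2}\,dt$ (via the two substitutions $u=1+\nu x$ then $t=\ln u$, which you combine into one), factors the denominator, and uses the identity $\frac{1+i\nu}{e^t-(1+i\nu)}-\frac{1-i\nu}{e^t-(1-i\nu)}=\frac{2i\nu e^t}{(e^t-(1+i\nu))(e^t-(1-i\nu))}$ directly rather than your two-step partial fractions plus $e^u/(e^u-w)=1+w/(e^u-w)$. Your care about analytic continuation is unnecessary in the paper's framework, since there the integral formula is taken as the \emph{definition} of $\Li_b(z)$ on $\C\setminus[1,\infty)$, but it is a correct justification in its own right.
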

\begin{proof}
We have
\[
I_b(\nu):=\E\ln^b(1+\nu\abs{X})=\frac{2}{\pi}\int_0^\infty\frac{\ln^b(1+\nu x)}{1+x^2}\,dx
\]
Change variables 
$u=1+\nu x$ and then $t=\ln(u)$ to find 
\begin{align*}
I_b(\nu)=\frac{2\nu}{\pi}\int_0^\infty \frac{t^be^t}{(e^t-(1+i\nu))(e^t-(1-i\nu))}\,dt. 
\end{align*}
Using partial fractions, we may write
\begin{align*}
I_b(\nu)&=\frac1{i\pi}\int_0^\infty t^b \frac{2i\nu e^t}
    {(e^t-(1+i\nu))(e^t-(1-i\nu))}\,dt\\
&=\frac1{i\pi}\int_0^\infty \frac{t^b(1+i\nu)}{e^t-(1+i\nu)}
    -\frac{t^b(1-i\nu)}{e^t-(1-i\nu)}\,dt\\
&=\frac{\Gamma(b+1)}{i\pi}(\Li_{b+1}(1+i\nu)-\Li_{b+1}(1-i\nu)). 
\end{align*}
by definition~\ref{defn:PolyLog}. 
The polylogarithms are defined because $\nu>0$, and if $b>0$, the value at $\nu=0$ is also defined. 
\end{proof}

General references for complex analysis are~\citep{SteinShakarchiComplex2003} for proofs and~\citep{NeedhamVisual1997} for intuition. 
If $z=x+iy\in\C$ with $x,y\in\R$, then $\Re(z):=x$ and $\Im(z):=y$. 
If $z=re^{i\theta}=x+iy\in\C$, denote $z^\ast=re^{-i\theta}=x-iy$ for the complex conjugate. 
Further $\abs{z}^2=zz^\ast=x^2+y^2$. 
Thus, if $w=se^{i\phi}$, we have
\[
(zw)^\ast=(rse^{i(\theta+\phi)})^\ast=rse^{-i(\theta+\phi)}=z^\ast w^\ast. 
\]
Further, if $w\neq 0$, 
\[
\abs{\frac{z}{w}}^2=\frac{zz^\ast}{ww^\ast}=\frac{r^2}{s^2}=\frac{\abs{z}^2}{\abs{w}^2}. 
\]

For us, analytic functions are synonymous with holomorphic ones.
We shall be using two theorems from complex analysis repeatedly. Cf.~\citep[page~52,96]{SteinShakarchiComplex2003}. 
\begin{thm}[Analytic Continuation]
Let $f$ and $g$ be analytic functions in a connected open subset $\Omega$ of $\C$. 
If $f(z)=g(z)$ for all $z$ in a non-empty open subset of $\Omega$, then $f(z)=g(z)$ throughout $\Omega$. 
\end{thm}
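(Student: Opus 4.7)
The plan is to reduce the claim to showing that if $h := f - g$ is an analytic function on a connected open set $\Omega \subseteq \C$ which vanishes on some non-empty open $U \subseteq \Omega$, then $h \equiv 0$ on $\Omega$. The standard device is to introduce the set
\[
S := \{z \in \Omega : h^{(n)}(z) = 0 \text{ for every } n \geq 0\}
\]
and prove that it is simultaneously non-empty, open, and closed in $\Omega$; connectedness of $\Omega$ then forces $S = \Omega$, whence $h \equiv 0$ and so $f \equiv g$.

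Non-emptiness is immediate: since $h$ vanishes identically on the open set $U$, so do all of its derivatives, so $U \subseteq S$. Closedness in $\Omega$ is also routine: each $h^{(n)}$ is analytic hence continuous on $\Omega$, so the level set $\{z \in \Omega : h^{(n)}(z) = 0\}$ is closed in $\Omega$, and $S$ is an intersection of such sets.

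The main step, and the only one that actually uses analyticity in an essential way, is openness of $S$. Given $z_0 \in S$, pick $r > 0$ with $D(z_0, r) \subseteq \Omega$. Analyticity of $h$ on $\Omega$ gives a Taylor expansion
\[
h(z) = \sum_{n=0}^\infty \frac{h^{(n)}(z_0)}{n!}(z - z_0)^n
\]
that converges on $D(z_0, r)$. Every coefficient vanishes because $z_0 \in S$, so $h \equiv 0$ on $D(z_0, r)$, and then so do all of its derivatives on that disc. Thus $D(z_0, r) \subseteq S$, proving $S$ is open. Combining non-emptiness, openness, and closedness with connectedness of $\Omega$ yields $S = \Omega$, and in particular $f(z) = g(z)$ for all $z \in \Omega$. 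The only non-routine ingredient is the fact that an analytic function on $\Omega$ is represented by its convergent power series on any open disc contained in $\Omega$; once that is in hand, the argument is purely point-set topology together with continuity of derivatives.
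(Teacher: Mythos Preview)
Your argument is correct and is the standard proof of the identity theorem: define $S$ as the set where all derivatives of $h=f-g$ vanish, check that $S$ is non-empty (contains $U$), closed (intersection of zero sets of continuous functions), and open (via the local power-series representation), and conclude $S=\Omega$ by connectedness.

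The paper itself does not supply a proof of this statement; it is quoted as a background result with a citation to Stein--Shakarchi, \emph{Complex Analysis}, pages 52 and 96. The proof there is exactly the one you have written, so there is nothing to compare: your proposal fills in what the paper deliberately omits, and does so correctly.
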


\begin{thm}[Primitives]
Let $f$ be an analytic function in a simply connected subset $\Omega$ of $\C$. 
Then for $z_0,z\in\Omega$, the function
\[
F(z):=\int_{z_0}^z f(w)\,dw=\int_\gamma f(w)\,dw
\]
is analytic too, with $\gamma$ any path from $z_0$ to $z$ lying in $\Omega$. 
\end{thm}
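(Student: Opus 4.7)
The plan is to prove this in three stages: (i) establish that $\int_\gamma f(w)\,dw$ is independent of the path $\gamma$ from $z_0$ to $z$ in $\Omega$, so that $F$ is well-defined; (ii) use path-independence to derive a clean local expression for the difference quotient $(F(z+h)-F(z))/h$; and (iii) take the limit $h\to 0$ to conclude $F'(z)=f(z)$, whence $F$ is analytic on $\Omega$.

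For stage (i), the key tool is Goursat's theorem: if a closed triangle $T$ lies in $\Omega$, then $\oint_{\partial T} f(w)\,dw = 0$. The standard argument subdivides $T$ into four similar sub-triangles and picks at each step one whose boundary integral has the largest magnitude; the diameters and perimeters both halve at each stage, and at a nested limit point $w_\ast$ the complex differentiability of $f$ gives $f(w) = f(w_\ast) + f'(w_\ast)(w-w_\ast) + o(|w-w_\ast|)$. The linear part integrates to zero around any closed path (it has a primitive), so the ML-inequality applied to the remainder on the $n$-th sub-triangle forces the original integral to vanish. With Goursat in hand, one upgrades from triangles to arbitrary closed polygonal paths by triangulation inside $\Omega$.

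Simple connectedness is then used to reduce a general closed loop to a polygonal approximation. Given two paths $\gamma_1,\gamma_2$ in $\Omega$ from $z_0$ to $z$, their concatenation (with $\gamma_2$ reversed) is null-homotopic. Covering the compact image of the homotopy by finitely many open disks contained in $\Omega$ and using uniform continuity, one replaces the loop by a homotopic closed polygonal path along disk-chords, which by the previous step integrates to zero. Hence $\int_{\gamma_1} f = \int_{\gamma_2} f$, and $F(z)$ is unambiguously defined. For stages (ii) and (iii), fix $z \in \Omega$ and pick $r>0$ with the open disk $D(z,r)\subset\Omega$. For $|h|<r$, choose any admissible path from $z_0$ to $z$ and then append the straight segment $[z,z+h]$; path-independence then gives
\[
F(z+h) - F(z) = \int_{[z,z+h]} f(w)\,dw.
\]
Subtracting $hf(z) = \int_{[z,z+h]} f(z)\,dw$ and applying the ML-inequality yields
\[
\left| \frac{F(z+h)-F(z)}{h} - f(z) \right|
\leq \sup_{w \in [z,z+h]} |f(w)-f(z)|,
\]
which tends to $0$ as $h\to 0$ by continuity of $f$ at $z$. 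Hence $F'(z)$ exists and equals $f(z)$, and since $z$ was arbitrary, $F$ is analytic on $\Omega$.

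The main obstacle is the topological step in stage (i): passing from null-homotopy of a loop to a polygonal decomposition to which Goursat applies. For the piecewise-smooth paths that actually appear in this paper's contour integrals, a compactness-plus-uniform-continuity argument on the homotopy image suffices; for general continuous paths one either restricts to rectifiable curves or defines the contour integral via polygonal approximation from the outset. Everything else — Goursat on triangles, triangulation of polygons, and the difference-quotient estimate — is routine once path-independence is established.
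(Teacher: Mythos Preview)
The paper does not actually prove this theorem; it is stated as a background result with a citation to \citep[page~52,96]{SteinShakarchiComplex2003} and used without proof throughout the appendix. Your argument is the standard textbook proof (essentially the one in Stein--Shakarchi): Goursat on triangles, upgrade to closed polygonal paths, use simple connectedness to reduce a general closed loop to a polygonal one, and then the straight-segment difference-quotient estimate to show $F'(z)=f(z)$. It is correct, and your honest flagging of the topological step (polygonal approximation of a null-homotopic loop) is appropriate; since the paper only ever integrates along piecewise-smooth contours, the compactness-plus-uniform-continuity version you mention is more than sufficient for its purposes.
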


\begin{defn}[The Logarithm]
For $z=re^{i\theta}\in \C-(-\infty,0]$, define (the principle branch of) the logarithm of $z$, $\ln(z)$ as
\[
\ln(z):=\ln(r)+i\theta=\int_1^z\,\frac{dw}{w}  
\]
for any path from 1 to $z$ in $\C-(-\infty,0]$. 
\end{defn}
\begin{rem}
Note that
$\ln(z^\ast)=\ln(r)-i\theta=\ln(z)^\ast$. 
The map $w\mapsto 1/w$ takes $\C-(-\infty,0]$ to itself; for if $w=se^{i\phi}$, with $\abs{\phi}<\pi$, then $1/w=(1/s)e^{-i\phi}$ which also lives in $\C-(-\infty,0]$. 
With this choice of principle branch, the logarithm still satisfies
$-\ln(1/w)=\ln(w)$
via
\[
-\ln(1/w)=-(\ln(1/s)+i(-\phi))=\ln(s)+i\phi=\ln(w). 
\]

Similarly, note that if $\Re(z),\Re(w)>0$, then 
$zw=rse^{i(\theta+\phi)}$ with $\abs{\theta+\phi}<\pi$
so $\arg(zw)=\theta+\phi$ and
\[
\ln(zw)=\ln(rs)+i(\theta+\phi)=\ln(z)+\ln(w)
\]
in this case.
However, the general identity $\ln(z_1z_2)=\ln(z_1)+\ln(z_2)$ does not hold.
\end{rem}

\begin{defn}[The Polylogarithm of Order 1]
Define the polylogarithm of order 1, $\Li_1(z)$ as
\[
\Li_1(z):=\sum_{j=1}^\infty \frac{z^j}{j} \qtext{for} \abs{z}<1
\]
and
\[
\Li_1(z):=-\ln(1-z)=\ln\left(\frac1{1-z}\right) \qtext{for} z\in\C-[1,\infty). 
\]
\end{defn}
For general $z$, the domain makes sense, as $1-z=-(z-1)\in\C-(-\infty,0]$ for the $z$ in question. 
Recall when $\abs{z}<1$, 
\[
-\ln(1-z)=\sum_{j=1}^\infty \frac{z^j}{j}, 
\]
noting that both sides agree when $z=0$, and upon differentiating, 
\[
\frac{d}{dz}\sum_{j=1}^\infty \frac{z^j}{j}
=\sum_{j=0}^\infty z^j=\frac1{1-z}=\frac{d}{dz}(-\ln(1-z)) 
\]
which means $-\ln(1-z)$ and the sum differ by a constant, namely 0. 

The order of the polylogarithms may be extended; the general integral form below will be useful for some of the computations later.  
\begin{defn}\label{defn:PolyLog}
For $b>0$, define the \emph{polylogarithm of order $b$} as
\[
\Li_b(z):=\sum_{j=1}^\infty \frac{z^j}{j^b} \qtext{for} \abs{z}<1
\]
and
\begin{gather*}
\Li_b(z):=\frac1{\Gamma(b)}\int_0^\infty \frac{zt^{b-1}}{e^t-z}\,dt
=\frac1{\Gamma(b)}\int_0^\infty \frac{zt^{b-1}e^{-t}}{1-e^{-t}z}\,dt. 
\end{gather*}
for $z\in\C-[1,\infty)$.  
\end{defn}
The nonintegral order polylogarithms also extend to the unit circle when the order is greater than 1. 
\begin{lem}\label{lem:PolyLogOnUnitCircle}
For $b>1$ and $z\in\C$ with $\abs{z}=1$, 
\[
\Li_b(z)<b. 
\]
\end{lem}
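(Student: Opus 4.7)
The plan is to exploit the absolute convergence of the defining series $\sum_{j=1}^\infty z^j/j^b$ on the closed unit disk, which holds precisely because $b>1$. By the triangle inequality, for any $z$ with $|z|=1$,
\[
|\Li_b(z)| \;\leq\; \sum_{j=1}^\infty \frac{|z|^j}{j^b} \;=\; \sum_{j=1}^\infty \frac{1}{j^b} \;=\; \zeta(b),
\]
so the problem reduces to bounding the Riemann zeta value $\zeta(b)$ by $b$.

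To estimate $\zeta(b)$, I would peel off the leading term and compare the remaining sum to an integral. Since $x\mapsto x^{-b}$ is strictly decreasing on $(0,\infty)$, each summand satisfies $j^{-b} < \int_{j-1}^{j} x^{-b}\,dx$ for $j\geq 2$, so
\[
\zeta(b) \;=\; 1 + \sum_{j=2}^\infty \frac{1}{j^b} \;<\; 1 + \int_1^\infty \frac{dx}{x^b} \;=\; 1 + \frac{1}{b-1} \;=\; \frac{b}{b-1}.
\]
For $b\geq 2$ this yields $\zeta(b) < b/(b-1) \leq 2 \leq b$, which gives the desired strict inequality.

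The main obstacle is the range $1<b<2$, where $b/(b-1)>b$ and the crude integral bound is too weak. One remedy is to separate more leading terms and only invoke integral comparison from some $j_0>1$ onward, letting $j_0$ grow as $b\to 1^+$ to absorb the pole of $\zeta$ at $b=1$; a cleaner alternative is to interpolate between the bound $\zeta(b)\leq b/(b-1)$ and the monotonicity $\zeta(b)\leq \zeta(b')$ for $b\geq b'$. In the paper's applications, however, the lemma is invoked at $b=2$ via lemma~\ref{lem:ExactMomentsLogLinear}, where the sharper numerical value $\zeta(2)=\pi^2/6 \approx 1.645 < 2$ makes the bound immediate with room to spare.
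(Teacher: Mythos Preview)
Your approach is exactly the paper's: triangle inequality followed by the integral comparison test. You correctly arrive at
\[
\sum_{j=1}^\infty \frac{1}{j^b} \;<\; 1 + \int_1^\infty \frac{dx}{x^b} \;=\; 1 + \frac{1}{b-1} \;=\; \frac{b}{b-1},
\]
and you are right to flag that this does \emph{not} give the claimed bound $b$ when $1<b<2$. The paper's proof reaches ``$=b$'' only through an arithmetic slip: it writes the antiderivative as $(b-1)\cdot(-x^{1-b})$ instead of $\tfrac{1}{b-1}\cdot(-x^{1-b})$, effectively inverting the factor $b-1$.

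Here is the genuine gap, though: the lemma as stated is \emph{false} on the range $1<b<2$, so your proposed remedies (peeling off more leading terms, interpolating with monotonicity) cannot succeed. Take $z=1$: then $\Li_b(1)=\zeta(b)$, and $\zeta(b)\to\infty$ as $b\to 1^+$ while $b\to 1$. Concretely, $\zeta(3/2)\approx 2.612 > 3/2$, and in fact $\zeta(b)>b$ throughout roughly $1<b<1.8$. So no amount of sharpening the tail estimate will recover the inequality there. The correct content of the lemma is simply that the series converges absolutely on $\abs{z}=1$ when $b>1$, with the clean bound $\abs{\Li_b(z)}\le \zeta(b)\le b/(b-1)$; the ``$<b$'' is an artifact of the slip. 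As you note, for the paper's actual use at $b=2$ one has $\zeta(2)=\pi^2/6<2$ and all is well.
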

\begin{proof}
By definition, 
\[
\Li_b(z)=\sum_{j=1}^\infty \frac{z^j}{j^b}
\qtext{so that when $\abs{z}=1$, }
\abs{\Li_b(z)}\leq \sum_{j=1}^\infty \frac{\abs{z}^j}{j^b} 
=\sum_{j=1}^\infty \frac1{j^b}
\]
The series is finite because $b>1$; 
concretely, by the integral test (because $1/x^b$ is convex), 
\begin{gather*}
\sum_{j=1}^\infty \frac1{j^b}=1+\sum_{j=2}^\infty \frac1{j^b}
\leq 1+\int_1^\infty \frac1{x^b}\,dx
=1+(b-1)\frac{-1}{x^{b-1}}\rvert_1^\infty
=b<\infty. 
\end{gather*}
\end{proof}
\begin{lem}\label{lem:PolyLogInputSquared}
For $z\in(\C-\R)\cup (-1,1)$ and $b>0$, 
\[
\Li_b(z)+\Li_b(-z)=\frac1{2^{b-1}}\Li_b(z^2). 
\]
If $b>1$, the equality also holds when $z=\pm 1$. 
\end{lem}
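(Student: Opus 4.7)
The plan is to verify the identity first on the disk $\abs{z}<1$ by manipulating the defining power series, and then extend to the full domain by analytic continuation, handling the boundary case $z=\pm 1$ separately by continuity.

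First I would start from the power series definition, valid for $\abs{z}<1$:
\[
\Li_b(z)+\Li_b(-z)
=\sum_{j=1}^\infty \frac{z^j}{j^b}+\sum_{j=1}^\infty \frac{(-z)^j}{j^b}
=\sum_{j=1}^\infty \frac{(1+(-1)^j)\,z^j}{j^b}.
\]
The odd-indexed terms cancel, so only $j=2k$ survives, giving
\[
\Li_b(z)+\Li_b(-z)
=\sum_{k=1}^\infty \frac{2\,z^{2k}}{(2k)^b}
=\frac{2}{2^b}\sum_{k=1}^\infty \frac{(z^2)^k}{k^b}
=\frac1{2^{b-1}}\Li_b(z^2).
\]
Since $\abs{z}<1$ implies $\abs{z^2}<1$, every series above converges, so the identity holds on this open disk.

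Next I would upgrade to the claimed domain $\Omega:=(\C-\R)\cup(-1,1)$ by analytic continuation. The function $\Li_b$ is analytic on $\C-[1,\infty)$, so $z\mapsto \Li_b(z)+\Li_b(-z)$ is analytic on $\C-\bigl((-\infty,-1]\cup[1,\infty)\bigr)=\Omega$. For the right-hand side, I would check that $z\in\Omega$ forces $z^2\notin[1,\infty)$: if $z=iy$ with $y\in\R\setminus\set{0}$, then $z^2=-y^2<0$; if $z\in(-1,1)$, then $z^2\in[0,1)$; otherwise $z\notin\R$ with $\Re(z)\neq 0$, so $z^2$ has nonzero imaginary part $2\Re(z)\Im(z)$. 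Thus $z\mapsto \frac1{2^{b-1}}\Li_b(z^2)$ is also analytic on $\Omega$. Since $\Omega$ is connected (the complement in $\C$ of two disjoint closed rays) and open, and contains the open disk $\abs{z}<1$ on which the two analytic functions agree, the analytic continuation theorem quoted in the excerpt yields equality throughout $\Omega$.

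Finally, for the boundary case $z=\pm 1$ under the extra hypothesis $b>1$, I would invoke Lemma~\ref{lem:PolyLogOnUnitCircle} (or rather its underlying series), which shows that $\Li_b$ extends continuously to the closed unit disk when $b>1$. Then both sides of the identity are continuous at $z=\pm 1$, so passing to the limit along, say, $z=re^{i\theta}$ with $r\nearrow 1$ from $\Omega$ preserves the equality. The only step with any real content is the power-series manipulation; the rest is routine bookkeeping about domains, and the main potential pitfall is simply to make sure the domain of the RHS is understood correctly (i.e. that $z^2$ never crosses the branch cut when $z\in\Omega$), which the short case analysis above settles.
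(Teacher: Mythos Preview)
Your proof is correct and follows essentially the same approach as the paper: power series manipulation on $\abs{z}<1$, then analytic continuation to $(\C-\R)\cup(-1,1)$, then the $b>1$ boundary case. You add welcome detail the paper omits, namely the verification that $z\in\Omega$ implies $z^2\notin[1,\infty)$ and that $\Omega$ is open and connected; the paper simply asserts both sides are analytic on $\Omega$. For $z=\pm 1$ the paper just notes the power series converge there when $b>1$, whereas you argue by continuity via Lemma~\ref{lem:PolyLogOnUnitCircle}; both are fine.
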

\begin{rem}\label{rem:LogShiftPM}
When $b=1$, recover
\[
\ln(1-z)+\ln(1+z)=-\big(\Li_1(z)+\Li_1(-z)\big)=-\Li_1(z^2)=\ln(1-z^2). 
\]
\end{rem}
\begin{proof}
First assume $\abs{z}<1$. 
From the power series,
\begin{gather*}
\Li_b(z)+\Li_b(-z)=\sum_{j=1}^\infty \frac{z^j+(-z)^j}{j^b}
=\sum_{j=1}^\infty z^j\frac{1+(-1)^j}{j^b}\\
=2\sum_{j=1}^\infty \frac{z^{2j}}{(2j)^b}
=\frac1{2^{b-1}}\sum_{j=1}^\infty \frac{(z^2)^j}{j^b}
=\frac1{2^{b-1}}\Li_b(z^2). 
\end{gather*}
Both sides are analytic functions on $(\C-\R)\cup (-1,1)$, so by analytic continuation, the identity continues to hold there. 
If $b>1$, the power series are also defined at $z=\pm 1$. 
\end{proof}

%
A useful property of the polylogarithms and the logarithm that we shall use repeatedly in computations is that they are all symmetric about the real axis, that is,
$\Li_b(z^\ast)^\ast=\Li_b(z)$ 
or concretely
\[
\Re\Li_b(z^\ast)=\Re\Li_b(z) \qtext{and}
\Im\Li_b(z^\ast)=-\Im\Li_b(z). 
\]
Powers and polynomials of such functions also have this property.
Intuitively this symmetry follows from the real coeffecients in their power series expansions, so that $\Li(x)\in\R$ when $x<1$. 
Rigorously, we use the Schwarz reflection principle; because $\Li_b(z)$ is analytic in $\C-[1,\infty)$ when $0\leq \arg(z)<\pi$ and real valued on $(-\infty, 1)$, $\Li_b(z)$ may be extended to the rest of $\C-[1,\infty)$ in an analytic fashion. 
Analytic continuation then dictates that this extension coincides with the original definition of $\Li_b(z)$. 
See~\citep{SteinShakarchiComplex2003} pages 57-59 for the Schwarz reflection principle, page 56 for showing the integral definitions of $\Li_b(z)$ are analytic, and page 52 for the principle of analytic continuation. 

\subsection{Arctan and the Inverse Tangent Integrals} 
The function $t\mapsto\arctan(t)$ is proportional to the distribution function of $\abs{X}$ with $X\drawn\Cauchy{1}$. 
It is then perhaps not surprising that $\arctan$ and its relatives arise in working with functions of Cauchy random variables. 
We outline the properties we shall be using here.

The following definition is opaque but most useful to us. 
\begin{defn}\label{defn:ComplexArcTan}
Define $\arctan(z)$ as
\[
\arctan(z):=\sum_{j=0}^\infty (-1)^j\frac{z^{2j+1}}{2j+1}
\qtext{for} \abs{z}<1,
\]
and 
\[
\arctan(z)=\int_0^z\frac1{1+w^2}\,dw
\qtext{for} z\in(\C-i\R)\cup (-i,i). 
\]
Equivalently,
\[
\arctan(z):=\frac1{2i}(\ln(1+iz)-\ln(1-iz))=\frac1{2i}(\Li_1(iz)-\Li_1(-iz)). 
\]
\end{defn}
\begin{rem}
From the integral formulation, we also immediately have, with $v=-w$, 
\[
\arctan(-z)=\int_0^{-z}\frac1{1+w^2}\,dw=-\int_0^z\frac1{1+(-v)^2}\,dv
=-\arctan(z). 
\]
The last definition for $\arctan(z)$ follows from 
\begin{gather*}
\frac{d}{dz}\frac1{2i}(\ln(1+iz)-\ln(1-iz))
=\frac1{2i}\left(\frac{i}{1+iz}-\frac{(-i)}{1-iz}\right)\\
=\frac1{2}\left(\frac1{1+iz}+\frac1{1-iz}\right)
=\frac1{1+z^2}=\frac{d}{dz}\arctan(z)  
\end{gather*}
and that $\arctan(0)=0$.
\end{rem}

We can generalize. 
\begin{defn}\label{defn:InverseTangentIntegral}
For $z\in\C-i\R\cup(-i,i)$ and $b>0$, define
the \emph{inverse tangent integral of order} $b$ as
\[
\Ti_b(z):=\sum_{j=0}^\infty (-1)^j\frac{z^{2j+1}}{(2j+1)^b}\qtext{for} \abs{z}<1
\]
and
\[
\Ti_b(z)=\frac{\Li_b(iz)-\Li_b(-iz)}{2i} \qtext{for} z\in\C-i\R\cup(-i,i). 
\]
\end{defn}
\begin{rem}
Note if $\abs{y}<1$, we find
\begin{gather*}
\Li_b(iy)-\Li_b(-iy)=\sum_{j=1}^\infty \frac{(iy)^j-(-iy)^j}{j^b}
=\sum_{j=1}^\infty i^j\frac{y^j}{j^b}(1-(-1)^j)\\
=2\sum_{j=0}^\infty i^{2j+1}\frac{y^{2j+1}}{(2j+1)^b}
=2i\sum_{j=0}^\infty (-1)^j\frac{y^{2j+1}}{(2j+1)^b}
=:2i\Ti_b(y)\in i\R. 
\end{gather*}
Hence,
\[
\Ti_b(y)=\frac{\Li_b(iy)-\Li_b(-iy)}{2i} 
\]
when $\abs{y}<1$ and $b>0$. 
The right hand side continues to make sense for $y\in(\C-i\R)\cup(-i,i)$, so we may define 
\[
\Ti_b(z):=\frac{\Li_b(iz)-\Li_b(-iz)}{2i}
\]
as an analytic function on $z\in(\C-i\R)\cup (-i,i)$ that agrees with the power series on the interior of the unit circle. 
\end{rem}
\begin{rem}
In particular, we have $\Ti_1(z)=\arctan(z)$. 
\end{rem}

To focus on the behavior of $\arctan$ on $(-i,i)$ which was not addressed in the inversion formula~\ref{lem:ArcTanInversionFormula}, we change points of view through a rotation of the complex plane. 
\begin{defn}\label{defn:ATanh}
Define the function $\atanh$ as
\[
\atanh(x)=\sum_{j=0}^\infty \frac{x^{2j+1}}{2j+1}
\qtext{for} \abs{x}<1,
\]
and as
\[
\atanh(z)=\int_0^z\frac1{1-w^2}\,dw=-i\arctan(iz)
\qtext{for} z\in(\C-\R)\cup (-1,1). 
\]
or equivalently as
\[
\atanh(z)=\frac1{2}(\ln(1+z)-\ln(1-z))=\frac1{2}(\Li_1(z)-\Li_1(-z)). 
\]
\end{defn}
To see that the definitions are consistent, 
note first from the power series, $\atanh(0)=0=\arctan(0)$, 
while on the other hand,
\[
\frac{d}{dz}(-i)\arctan(iz)=\frac{(-i)}{1+(iz)^2}(i)=\frac1{1-z^2}
=\frac{d}{dz}\atanh(z). 
\]

\begin{lem}\label{lem:LogToAtanhLogSplit}
Let $z\in(\C-\R)\cup (-1,1)$ 
then
\[
\ln(1+z)=\atanh(z)+\frac1{2}\ln(1-z^2). 
\]
\end{lem}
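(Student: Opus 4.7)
The plan is to verify both identities by direct substitution using the definitions of $\atanh$ and $\chi_b$, together with the already-established ``doubling'' identity in lemma~\ref{lem:PolyLogInputSquared} and its remark. Essentially everything reduces to a one-line algebraic rearrangement once the right identity is brought in.

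For the first identity, I would start from the definition $\atanh(z)=\frac{1}{2}(\ln(1+z)-\ln(1-z))$ given in definition~\ref{defn:ATanh}. Adding $\frac{1}{2}\ln(1-z^2)$ and invoking remark~\ref{rem:LogShiftPM}, which states $\ln(1-z)+\ln(1+z)=\ln(1-z^2)$ on $(\C-\R)\cup(-1,1)$, I get
\begin{align*}
\atanh(z)+\frac{1}{2}\ln(1-z^2)
&=\frac{1}{2}\ln(1+z)-\frac{1}{2}\ln(1-z)+\frac{1}{2}\bigl(\ln(1-z)+\ln(1+z)\bigr)\\
&=\ln(1+z),
\end{align*}
which is the claim.

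For the second identity, the same strategy works verbatim with $\Li_b$ in place of $\ln(1-\cdot)$. By definition $\chi_b(z)=\frac{1}{2}(\Li_b(z)-\Li_b(-z))$, and lemma~\ref{lem:PolyLogInputSquared} gives $\Li_b(z)+\Li_b(-z)=\frac{1}{2^{b-1}}\Li_b(z^2)=\frac{2}{2^b}\Li_b(z^2)$ on the appropriate domain. Adding $\frac{1}{2^b}\Li_b(z^2)$ to $\chi_b(z)$ produces
\begin{align*}
\chi_b(z)+\frac{1}{2^b}\Li_b(z^2)
&=\frac{1}{2}\Li_b(z)-\frac{1}{2}\Li_b(-z)+\frac{1}{2}\bigl(\Li_b(z)+\Li_b(-z)\bigr)\\
&=\Li_b(z),
\end{align*}
finishing the proof.

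There is no real obstacle here; the only thing to be careful about is that the identities being combined are valid on $(\C-\R)\cup(-1,1)$, which is precisely the domain asserted in the statement, so no further analytic continuation argument is needed. In a sense this lemma is just rewriting lemma~\ref{lem:PolyLogInputSquared} (and its $b=1$ specialization in remark~\ref{rem:LogShiftPM}) using the $\chi_b$ and $\atanh$ notation.
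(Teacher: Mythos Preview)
Your proof is correct. The route differs slightly from the paper's: the paper argues directly from the power series by splitting $\sum z^j/j^b$ into odd and even $j$ (the odd part giving $\chi_b(z)$ and the even part giving $2^{-b}\Li_b(z^2)$), then invokes analytic continuation to pass from $|z|<1$ to the full domain; the $b=1$ case is then read off by substituting $-z$ and using that $\atanh$ is odd. You instead take the definitions $\chi_b(z)=\tfrac12(\Li_b(z)-\Li_b(-z))$ and $\atanh(z)=\tfrac12(\ln(1+z)-\ln(1-z))$ as the starting point and close the loop with lemma~\ref{lem:PolyLogInputSquared} (and its $b=1$ specialization in remark~\ref{rem:LogShiftPM}). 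Your version is a bit more modular, since the power-series-plus-continuation work has already been packaged into lemma~\ref{lem:PolyLogInputSquared}; the paper's version is self-contained but effectively redoes that computation. Either way the content is the same odd/even decomposition.
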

\begin{proof}
Just split into even and odd degree terms.
\begin{gather*}
\Li_1(z)=\sum_{j=1}^\infty \frac{z^j}{j}
=\sum_{j=0}^\infty \frac{z^{2j+1}}{(2j+1)}
    +\sum_{j=1}^\infty \frac{z^{2j}}{(2j)}
=\atanh(z)+\frac1{2}\sum_{j=1}^\infty \frac{(z^2)^j}{j}\\
=\atanh(z)+\frac1{2}\Li_1(z^2). 
\end{gather*}
The equality extends to $(\C-\R)\cup(-1,1)$ as both sides are analytic there.
We now have
\[
\ln(1+z)=-\Li_b(-z)=-\atanh(-z)+\frac1{2}\ln(1-(-z)^2)
=\atanh(z)+\frac1{2}\ln(1+z^2) 
\]
as desired. 
\end{proof}

Here is the addition formula. 
\begin{lem}[Atanh Addition Formula]\label{lem:ATanhAdditionFormula}
If $-1<x,y<1$, 
\[
\atanh(x)+\atanh(y)=\atanh\left(\frac{x+y}{1+xy}\right). 
\]
If $z\in\C-\R$, 
\[
\atanh(z)+\atanh(z^\ast)=\atanh\left(\frac{2\Re(z)}{1+\abs{z}^2}\right)
\]
\end{lem}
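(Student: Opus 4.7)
The plan is to derive both identities from the representation $\atanh(w)=\frac{1}{2}(\ln(1+w)-\ln(1-w))$ given in Definition~\ref{defn:ATanh}, combined with elementary properties of the principal branch of the logarithm.

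First I would handle the real case $-1<x,y<1$. Here all four arguments $1\pm x$, $1\pm y$ are strictly positive reals, so the standard identity $\ln(a)+\ln(b)=\ln(ab)$ applies without any branch issues. Adding the two $\atanh$ values reduces the left-hand side to $\frac{1}{2}\ln\bigl((1+x)(1+y)/((1-x)(1-y))\bigr)$. To match this with $\atanh((x+y)/(1+xy))$, I would use the factorizations
\[
1\pm\frac{x+y}{1+xy}=\frac{(1\pm x)(1\pm y)}{1+xy},
\]
which simultaneously verify that $(x+y)/(1+xy)\in(-1,1)$, so the right-hand side is well-defined as a real $\atanh$.

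For the complex case $z\in\C-\R$, the idea is to exploit $z^\ast=\bar z$ together with the identity $\ln(\bar w)=\overline{\ln w}$ for $w\in\C-(-\infty,0]$. Applied to $w=1+z$ and $w=1-z$ --- which lie in $\C-(-\infty,0]$ since their imaginary parts $\pm\Im(z)$ are nonzero --- this gives $\ln(1+z)+\ln(1+z^\ast)=2\Re\ln(1+z)=\ln\abs{1+z}^2$, and similarly for the $1-z$ pair. Hence
\[
\atanh(z)+\atanh(z^\ast)=\tfrac{1}{2}\ln\!\left(\frac{\abs{1+z}^2}{\abs{1-z}^2}\right)=\tfrac{1}{2}\ln\!\left(\frac{1+2\Re(z)+\abs{z}^2}{1-2\Re(z)+\abs{z}^2}\right).
\]
A direct expansion of $\atanh\bigl(2\Re(z)/(1+\abs{z}^2)\bigr)$ from its logarithmic form yields the same expression; that the input lies in $(-1,1)$ follows from the AM--GM inequality $\abs{2\Re(z)}\le 2\abs{z}\le 1+\abs{z}^2$.

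The one genuine subtlety, which I expect to be the main obstacle, is the manipulation of branch cuts when collapsing $\ln(1\pm z)+\ln(1\pm z^\ast)$ into a single logarithm: the identity $\ln(u)+\ln(v)=\ln(uv)$ is false in general over $\C$. The conjugate-pair structure is precisely what saves the argument --- the sum of a complex logarithm and its conjugate is twice the real part, which unambiguously equals the logarithm of the positive real modulus squared. Making that observation explicit reduces the question to real-variable algebra, and the remaining computations are routine.
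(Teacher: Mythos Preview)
Your proof is correct and takes a genuinely different route from the paper. The paper establishes the identity by differentiating: it computes
\[
\frac{d}{dz}\atanh\!\left(\frac{z+w}{1+zw}\right)=\frac{1}{1-z^2}=\frac{d}{dz}\atanh(z),
\]
concludes the two sides differ by a constant, and fixes that constant at $z=0$; most of the paper's argument is then spent verifying, via AM--GM and monotonicity, that $(z+w)/(1+zw)$ actually lands in the domain of $\atanh$ in the two cases $w=z^\ast$ and $x,y\in(-1,1)$. You instead substitute the logarithmic representation of Definition~\ref{defn:ATanh} directly and reduce everything to the elementary factorizations $1\pm\frac{x+y}{1+xy}=\frac{(1\pm x)(1\pm y)}{1+xy}$, with the conjugate symmetry $\ln(\bar w)=\overline{\ln w}$ collapsing the complex case to a real computation. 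Your method is more hands-on but entirely elementary, and your diagnosis that the conjugate-pair structure is precisely what circumvents the branch-cut obstruction is exactly right; the paper's derivative trick sidesteps branches differently, by never combining logarithms at all, at the cost of a separate domain-verification step. Both arguments meet at the same AM--GM bound for $2\Re(z)/(1+|z|^2)$.
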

\begin{proof}
Because $\atanh$ is odd, the addition formula also covers subtraction too. 
Check that 
\[
\frac{d}{dz}\atanh\left(\frac{z+w}{1+zw}\right)=\frac1{1-z^2}=\frac{d}{dz}\atanh(z). 
\]
So
\begin{gather*}
\atanh\left(\frac{z+w}{1+zw}\right)=\atanh(z)+c
\end{gather*}
with $c$ a constant. Taking $z=0$ forces $c=\atanh(w)$ as desired. 

For $z,w\in(\C-\R)\cup (-1,1)$, let
\[
f(z,w):=\frac{z+w}{1+zw}. 
\]
We want to know when $f(z,w)$ also lies in the domain of atanh. 
When $w=z^\ast$, 
\begin{gather*}
\abs{\frac{z+z^\ast}{1+zz^\ast}}
\leq \frac{2\abs{\Re(z)/\abs{z}}}{\frac1{\abs{z}}+\abs{z}}
\leq \frac{2}{\frac1{\abs{z}}+\abs{z}}\leq \frac1{\sqrt{\abs{z}/\abs{z}}}=1. 
\end{gather*}
by the AM-GM inequality. 
The equality case occurs just if $\abs{z}=1$, but in that case, 
$\abs{\Re(z)}/\abs{z}<1$
as $z=\pm 1$ is not allowed for $\atanh$. 
We are thus ok for all $z\in (\C-\R)\cup (-1,1)$ in this $w=z^\ast$ case.

When $x,y\in (-1,1)$, we may consider
\[
\partial_x f(x,y)=\frac1{1+xy}-\frac{(x+y)}{(1+xy)^2}y
=\frac1{(1+xy)^2}(1+xy-xy-y^2)=\frac{1-y^2}{(1+xy)^2}>0
\]
and by symmetry, $\partial_y f(x,y)>0$. 
So $f$ is increasing in each of the individual coordinates.
In particular, when $-1<x<y<1$, 
\[
\frac{2x}{1+x^2}=f(x,x)<f(x,y)<f(y,y)=\frac{2y}{1+y^2}.  
\]
For each permutation of $x$, $y$, and $0$, check that 
\[
\abs{f(x,y)}<\frac{2t}{1+t^2}<1
\qtext{with} t=\max\set{\abs{x},\abs{y}}, 
\]
by the AM-GM inequality, with strict inequality because $\abs{x},\abs{y}<1$.

%
%

\end{proof}

\subsection{Dilogarithm Properties}
The dilogarithm is the polylogarithm of order 2. 
\begin{lem}[Reflection Formula]\label{lem:DilogarithmRelationShift1}
For $z\in(\C-\R)\cup(0,1)$, 
\[
\Li_2(z)+\Li_2(1-z)-\Li_2(1)=-\ln(z)\ln(1-z). 
\]
\end{lem}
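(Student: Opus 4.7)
The plan is a short analytic-continuation argument: show both sides of the identity are analytic on a common simply connected domain, verify they have the same derivative, and pin down the integration constant at one point.

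\textbf{Step 1: Identify a common domain of analyticity.} The function $\Li_2(z)$ is analytic on $\C \setminus [1,\infty)$, so $\Li_2(1-z)$ is analytic on $\C \setminus (-\infty, 0]$. Both factors in $\ln(z)\ln(1-z)$ are analytic on $\C \setminus (-\infty, 0]$ and $\C \setminus [1,\infty)$ respectively. Consequently, each side of the claimed identity is analytic on
\[
\Omega := \C \setminus \big((-\infty, 0] \cup [1,\infty)\big),
\]
which is connected (in fact simply connected, since each of the two deleted rays goes off to infinity and a loop around either can be contracted through the gap $(0,1)$). The stated domain $(\C - \R) \cup (0,1)$ is contained in $\Omega$, so it suffices to prove the identity on all of $\Omega$.

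\textbf{Step 2: Match derivatives.} Using $\Li_2'(w) = \Li_1(w)/w = -\ln(1-w)/w$ (valid on $\C \setminus [1,\infty)$ away from $0$, and extending analytically through $0$), differentiate the left-hand side:
\[
\frac{d}{dz}\big(\Li_2(z) + \Li_2(1-z) - \Li_2(1)\big)
= -\frac{\ln(1-z)}{z} + \frac{\ln(z)}{1-z}.
\]
Differentiate the right-hand side directly:
\[
\frac{d}{dz}\big(-\ln(z)\ln(1-z)\big)
= -\frac{\ln(1-z)}{z} + \frac{\ln(z)}{1-z}.
\]
The derivatives agree throughout $\Omega$, so the difference of the two sides is a constant $c$ on the connected set $\Omega$.

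\textbf{Step 3: Pin down the constant.} Evaluate along the real segment, letting $z \to 0^+$. On the left, $\Li_2(0) = 0$ and $\Li_2(1-z) \to \Li_2(1)$ by continuity, so the left-hand side tends to $0$. On the right, $\ln(1-z) = -z + O(z^2)$, so $-\ln(z)\ln(1-z) = z\ln(z) + O(z^2 \ln z) \to 0$. Hence $c = 0$ and the identity holds on all of $\Omega$, in particular on $(\C - \R)\cup(0,1)$.

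The only point that requires a bit of care is the topological claim in Step 1 (so that the ``locally constant implies constant'' conclusion of Step 2 is legitimate), but this is routine; the computation in Step 2 is the heart of the proof and is a direct check once the correct formula $\Li_2'(w) = -\ln(1-w)/w$ is in hand.
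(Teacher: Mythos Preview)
Your proof is correct and follows essentially the same route as the paper: differentiate both sides, observe they agree on a simply connected domain so the difference is constant, then determine the constant by letting $z\to 0^+$ and using $\ln(z)\ln(1-z)\to 0$ via the Taylor expansion of $\ln(1-z)$. The only cosmetic difference is that you work on the slightly larger domain $\Omega=\C\setminus((-\infty,0]\cup[1,\infty))$ rather than $(\C-\R)\cup(0,1)$, which if anything makes the argument cleaner.
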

\begin{proof}
%
%
(Compare to~\citep[page~5]{LewinPolylogarithms1981}.)
Consider
\begin{gather*}
\frac{d}{dz}(\Li_2(z)+\Li_2(1-z))
=\frac{\Li_1(z)}{z}+\frac{\Li_1(1-z)}{1-z}(-1)
=\frac{-\ln(1-z)}{z}+\frac{\ln(z)}{1-z}. 
\end{gather*}
On the other hand,
\begin{gather*}
\frac{d}{dz}(-\ln(z)\ln(1-z))=\frac{-\ln(1-z)}{z}+\frac{\ln(z)}{1-z}. 
\end{gather*}

Because the domain $(\C-\R)\cup(0,1)$ is simply connected and the derivative above is analytic there, we have
\begin{gather*}
-\ln(z)\ln(1-z)+\ln(z_0)\ln(1-z_0)\\
=\Li_2(z)+\Li_2(1-z)-(\Li_2(z_0)+\Li_2(1-z_0)) 
\end{gather*}
for some $z_0$ which we may take to lie on $(0,1)$. 
Taking the limit as $z_0\to 0$ is safe, as the Taylor series for $\ln(1-z_0)$ ensures $\ln(z_0)\ln(1-z_0)\to 0$, while the dilogarithm is continuous on $(-\infty,1]$. 
Hence, 
\[
-\ln(z)\ln(1-z)=\Li_2(z)+\Li_2(1-z)-\Li_2(1)
\]
as desired. 
Note that proving the identity via integration by parts has to make this same limiting argument.  
\end{proof}

\subsection{Inversion Formulas}\label{sec:InversionFormulas}
The following lemma allows us to describe the survival function of $\abs{X}$ with $X\drawn\Cauchy{1}$ in a convenient way. 
Note that the survival function for $\abs{X}$  will only consider $z=x>0$. 
\begin{lem}\label{lem:ArcTanInversionFormula}
For $z\in\C-i\R$, 
\[
\arctan(z)+\arctan\left(\frac1{z}\right)
=\begin{cases}
\pi/2 &\txt{if} \Re(z)>0\\
-\pi/2 &\txt{if} \Re(z)<0. 
\end{cases}
\]
\end{lem}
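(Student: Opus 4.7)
The plan is to use the standard principle that a holomorphic function with vanishing derivative on a connected open set is constant there. The domain $\C - i\R$ decomposes into two connected components, the open right half plane and the open left half plane, so it suffices to identify the constant value on each.

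First I would verify that $f(z) := \arctan(z) + \arctan(1/z)$ really is well-defined and analytic on $\C - i\R$. From definition~\ref{defn:ComplexArcTan}, $\arctan(w)$ is analytic on $(\C - i\R) \cup (-i, i)$, so I need to check that both $z$ and $1/z$ avoid $i\R$. Writing $z = re^{i\theta}$ with $\abs{\theta} < \pi/2$ or $\abs{\theta - \pi} < \pi/2$, we have $1/z = (1/r)e^{-i\theta}$, which has $\Re(1/z) = \cos(\theta)/r$ with the same sign as $\Re(z)$. So $z \in \C - i\R$ forces $1/z \in \C - i\R$ as well, and $f$ is analytic on $\C - i\R$.

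Next I would compute the derivative using $\frac{d}{dw}\arctan(w) = 1/(1+w^2)$, which comes directly from the integral in definition~\ref{defn:ComplexArcTan}:
\[
f'(z) = \frac{1}{1+z^2} + \frac{1}{1 + 1/z^2}\cdot\frac{-1}{z^2} = \frac{1}{1+z^2} - \frac{1}{z^2+1} = 0,
\]
valid on $\C - i\R$ since the only zeros of $1 + z^2$ are $\pm i \in i\R$. Hence $f$ is locally constant and therefore constant on each of the two connected components.

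Finally I would evaluate at one point in each half plane. At $z = 1$, the real-variable formula gives $\arctan(1) = \pi/4$ (this agrees with the series/integral definition since both coincide with the usual real arctan on $(-1, 1]$), so $f(1) = \pi/2$. Since $\arctan$ is odd (visible from its power series, or from $\arctan(-w) = -\arctan(w)$ already noted after definition~\ref{defn:ComplexArcTan}), evaluating at $z = -1$ gives $f(-1) = -\pi/2$. This is the claimed value on each component. The only obstacle is the routine bookkeeping of checking that each expression stays in the domain of analyticity of $\arctan$ and that the principal-branch logarithm identity $-\ln(1/w) = \ln(w)$ (already recorded in the paper) gives a consistent extension of the real arctan, used implicitly when differentiating the $\arctan(1/z)$ term; no deeper input is required.
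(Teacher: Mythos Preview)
Your proof is correct and follows exactly the same approach as the paper: show the derivative vanishes so that $f$ is constant on each connected component of $\C - i\R$, then evaluate at $z = \pm 1$. You are a bit more careful than the paper in verifying that $1/z$ lands back in the domain of $\arctan$, but the argument is otherwise identical.
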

\begin{rem}
On the imaginary axis, $\arctan(ir)=i\atanh(r)$ and $\atanh$ is only defined for $r\in (-1,1)$ so $1/r$ does not make sense there.
Consequently the domain in question has two connected components, so different constants should not be unexpected. 
\end{rem}
\begin{proof}
First note that the left hand side is a constant
\[
\frac{d}{dz}\left(\arctan(z)+\arctan\left(\frac1{z}\right)\right)
=\frac1{1+z^2}+\frac1{1+z^{-2}}\frac{-1}{z^2}=0. 
\]

The constant is determined by representative points $z=\pm 1$ in the right and left hand planes respectively. 
\end{proof}
\bibliographystyle{imsart-number} 

\bibliography{W2_level_l1}

\end{document}